\documentclass[11pt,a4paper]{article}
\usepackage{amsmath}
\usepackage{bbm}
\usepackage{amssymb}
\usepackage{amscd}
\usepackage{graphicx}
\usepackage{caption}
\usepackage{chngcntr}
\usepackage{textcomp}
\usepackage{float}
\usepackage{subfig}
\usepackage{multirow}
\usepackage{booktabs}
\usepackage{array}
\usepackage[colorlinks,citecolor=blue,urlcolor=blue]{hyperref}
\usepackage{color}
\usepackage{amsthm}
\usepackage{enumerate}
\usepackage[round]{natbib}
\defcitealias{EPA_ECATT}{EPA, 2025}  
\usepackage{bm}
\usepackage{algorithm}
\usepackage{algpseudocode}

\newtheorem{theorem}{Theorem}
\newtheorem{proposition}{Proposition}
\newtheorem{remark}{Remark}
\newtheorem{lemma}{Lemma}

\newtheorem{definition}{Definition}
\newtheorem{assumption}{Assumption}
\newtheorem{simulation}{Simulation}

\title{Robust linear regression under latent group heterogeneity\footnotetext{We thank to Professor Shige Peng from Shandong University for rounds of useful discussions.}}
\author{Xifeng Li\thanks{Corresponding author. School of Mathematics, Shandong University, PR China, lixfay@mail.sdu.edu.cn.}\quad and \quad Shuzhen Yang\thanks{Shandong University-Zhong Tai Securities Institute for Financial Studies, Shandong University, PR China, yangsz@sdu.edu.cn.}}

\date{}
\begin{document}
\maketitle
\begin{abstract}
Uncertainty is ubiquitous in real-world data, and the assumptions underlying classical linear regression models are often violated in practice. Inspired by the theory of sublinear expectation, we consider a linear regression model where the random intercept term has mean uncertainty and the error term has variance uncertainty. We develop a novel two-step approach, named Expectation-Maximization with Moving Block (EMMB), to estimate the model parameters. The proposed method requires no prior knowledge of group structures or change points. Theoretical properties of the estimators are established under mild regularity conditions. Simulation studies and a real-data application to $\mathrm{PM}_{2.5}$ concentration modeling in Beijing demonstrate the superiority of the proposed method: it captures substantial intercept heterogeneity overlooked by ordinary least squares and yields more accurate and interpretable estimates.\\

\noindent \textbf{Keywords:} Linear regression, Distribution uncertainty, Heterogeneity, $\mathrm{PM}_{2.5}$ modeling.
\end{abstract}

\section{Introduction}
In linear regression analysis, the ordinary least squares framework typically presupposes a constant intercept and homogeneous error variance. In practice, however, these assumptions are often violated, as both the intercept and error variance can exhibit substantial heterogeneity. For instance, in statistical modeling of $\mathrm{PM}_{2.5}$ concentrations, researchers commonly rely on meteorological factors and lagged pollution levels, yet unobserved factors such as emission intensity and regional transport often lead to substantial shifts in baseline pollution levels over time \citep{liang2015assessing}. Similar structures arise in many other fields, including regional economics, epidemiology, and finance \citep{eberhardt2011econometrics, austin2017intermediate, pouliot2016robust, hu2023arbitrage}. In these settings, intercept heterogeneity can be substantial relative to covariate effects, and ignoring it can distort coefficient estimates, reduce estimation efficiency, and impair prediction accuracy.

Most existing methods for handling heterogeneity focus on variations in regression coefficients or rely on known group structures. However, relatively little attention has been paid to settings where only the intercept and error variance vary across unknown groups. For example, standard fixed- and random-effects models require group membership to be known a priori, while classical change-point regression primarily focuses on shifts in regression coefficients. \cite{fan2024environment} propose environment invariant linear least squares to address endogeneity and heterogeneity among different environments, focusing on high-dimensional estimation and non-asymptotic theory. \cite{yan2024statistical} consider a four-regime segmented regression model for temporally dependent data with segmenting boundaries depending on multivariate covariates with nondiminishing boundary effects.

To address distribution uncertainty, \cite{peng2004filtration,peng2005nonlinear} introduced the theory of sublinear expectation. Specifically, the theory considers a family of probability measures $\{P_{\theta}\}_{\theta\in\Theta}$, with corresponding linear expectations $\{\mathbb{E}_{\theta}\}_{\theta\in\Theta}$, rather than a single probability measure $P_{\theta_0}$. Over the past two decades, the theory and methodology of sublinear expectation have matured into a well-established area of modern probability theory. Moreover, numerous valuable studies have contributed to this field within statistics; see, for example, \cite{lin2016k}, \cite{lin2017upper}, \cite{peng2020hypothesis}, \cite{jin2021optimal}, \cite{yang2023linear}, \cite{li2026solution}, among many others. For a comprehensive exposition of the theory of sublinear expectation, we refer readers to Peng's plenary lecture at the 2010 International Congress of Mathematicians \citep{PengICM2010} and his monograph \citep{peng2019nonlinear}.

Inspired by the sublinear expectation theory, we consider a linear regression model where the random intercept term $\nu$ has mean uncertainty and takes values in the interval $[\underline{\mu}, \overline{\mu}]$, and the error term $\varepsilon$ has variance uncertainty. To obtain consistent estimators of the regression coefficient vector $\bm{\beta}$ as well as the lower and upper bounds $\underline{\mu}$ and $\overline{\mu}$ of the random intercept, we propose the EMMB method. The proposed method first uses an EM algorithm to estimate $\bm{\beta}$ under an unknown group structure, treating the intercepts as missing data within small windows of size $n_0$. Then, a moving block procedure is used to estimate $\underline{\mu}$ and $\overline{\mu}$. Notably, the proposed EMMB method requires no prior knowledge of the number or locations of the underlying groups and is computationally simple.

The estimator $\hat{\bm{\beta}}_{\mathrm{EM}}$ obtained by the EM algorithm admits an explicit closed-form solution, thus obviating the need for iterative computation. Under mild regularity conditions, we establish the unbiasedness, consistency, and asymptotic normality of $\hat{\bm{\beta}}_{\mathrm{EM}}$, and further demonstrate the consistency of $\hat{\overline{\mu}}$ and $\hat{\underline{\mu}}$ obtained by the moving block procedure. Moreover, although our method does not require prior knowledge of change points, it remains fully applicable when such information is available, further enhancing its practical flexibility.

Comprehensive simulation studies demonstrate the superiority of the proposed EMMB method. It substantially outperforms OLS in settings with intercept heterogeneity and heteroscedasticity, while maintaining comparable performance when classical linear model assumptions hold. The practical utility of the EMMB method is further validated through an application to $\mathrm{PM}_{2.5}$ concentration modeling in Beijing. The method uncovers a wide intercept range (spanning nearly 150), which represents the unobserved day-specific baseline pollution level that OLS fails to capture. By accounting for such unobserved temporal heterogeneity, the EMMB method provides more accurate and interpretable estimates.

The remainder of the paper is organized as follows. Section \ref{sec:2} introduces the linear regression model under distribution uncertainty and develops the EMMB method for estimating the unknown parameters $\bm{\beta}, \underline{\mu}$ and $\overline{\mu}$. In Section \ref{sec:3}, we provide a theoretical analysis for the estimators obtained by the proposed method. Section \ref{sec:4} presents extensive simulation studies to evaluate the finite sample performance of the EMMB method. A real-data application to $\mathrm{PM}_{2.5}$ modeling is illustrated in Section \ref{sec:5}. Finally, Section \ref{sec:6} concludes the paper with some discussions. Technical proofs are relegated to the Appendix.

\section{Methodology}\label{sec:2}

\subsection{Model setup}\label{sec:2.1}

Let $\{F_{\theta}\}_{\theta\in\Theta}$ be a family of distributions, and let $\mathbb{E}_{\theta}$ denote the expectation under distribution $F_\theta$. For a random variable $\xi : \Omega \to \mathbb{R}$ such that $\mathbb{E}_{\theta}[|\xi|]<\infty$ for all $\theta \in \Theta$, its sublinear expectation is defined as
\begin{equation}\label{eq:sub}
\hat{\mathbb{E}}[\xi]:=\sup \limits_{\theta \in \Theta} \mathbb{E}_{\theta}[\xi].
\end{equation}
Here, we do not conduct our analysis directly in the sublinear expectation space introduced by \cite{peng2004filtration,peng2005nonlinear}, but merely draw on the above definition \eqref{eq:sub}. Other settings in this paper, such as the independent and identically distributed (i.i.d.) assumption, remain in the classical sense.

Throughout the paper, $\mathbb{E}_\theta[\cdot]$ denotes the expectation taken under a specific distribution $F_\theta$ in the family $\{F_{\theta}\}_{\theta\in\Theta}$. The symbols $P$ and $\mathbb{E}$ are used in the classical sense: for any random variable $\xi$,  $P(\xi \in A)$ and
$\mathbb{E}[\xi]$ are computed with respect to the actual distribution of $\xi$. When the distribution of $\xi$ is not uniquely specified by the model, these values may vary accordingly. The notation $\hat{\mathbb{E}}[\cdot]$ is reserved for the sublinear expectation defined in \eqref{eq:sub}.

We consider the following linear regression model
\begin{equation}\label{eq:lr}
y = \mathbf{x}^{\top}\bm{\beta} + \nu + \varepsilon,
\end{equation}
where $y \in \mathbb{R}$ is the response variable, $\mathbf{x}=(x_1,\ldots,x_p)^{\top} \in \mathbb{R}^p$ is the vector of predictors having a certain
distribution $F_{\mathbf{x}}(\cdot)$, and $\bm{\beta}=(\beta_1,\ldots,\beta_p)^{\top} \in \mathbb{R}^p$ is the vector of regression coefficients. It is assumed that $(\nu, \varepsilon)$ is independent of $\mathbf{x}$ and $\varepsilon$ is independent of $\nu$. Unlike traditional linear regression models, we assume that the random intercept term $\nu$ has mean uncertainty and takes values in the interval $[\underline{\mu}, \overline{\mu}]$ , and the error term $\varepsilon$ has variance uncertainty. In other words,
$$
\overline{\mu}=\hat{\mathbb{E}}[\nu],\qquad\underline{\mu}=-\hat{\mathbb{E}}[-\nu],\qquad\overline{\sigma}^{2}=\hat{\mathbb{E}}[\varepsilon^{2}],\qquad
\underline{\sigma}^{2}=-\hat{\mathbb{E}}[-\varepsilon^{2}],
$$
and $\hat{\mathbb{E}}[\varepsilon]=\hat{\mathbb{E}}[-\varepsilon]=0$.

The intervals $[\underline{\mu}, \overline{\mu}]$ and $[\underline{\sigma}^{2}, \overline{\sigma}^{2}]$ characterize the mean uncertainty and the variance uncertainty of $\nu$ and $\varepsilon$ respectively. A similar model setup appears in \cite{yang2023linear}. However, our assumptions about the data generation process and our estimation method (EMMB) are fundamentally different, as detailed in Section \ref{sec:2.2}.

To quantify the conditional mean under distribution uncertainty, we introduce the following conditional sublinear expectation
$$
\hat{\mathbb{E}}[y\mid \mathbf{x}]=\sup \limits_{\theta \in \Theta} \mathbb{E}_{\theta}[y\mid \mathbf{x}].
$$
This operator characterizes the maximum conditional mean of $y$ given $\mathbf{x}$ over all possible distributions in $\{F_{\theta}\}_{\theta\in\Theta}$. Based on this definition, we have the following proposition for the linear regression model with distribution uncertainty.

\begin{proposition}\label{prop:1}
For the linear regression model \eqref{eq:lr}, the conditional mean of $y$ given $\mathbf{x}$ satisfies
$$
\hat{\mathbb{E}}[y\mid \mathbf{x}]=\mathbf{x}^{\top}\bm{\beta} + \overline{\mu}
$$
and
$$
-\hat{\mathbb{E}}[-y\mid \mathbf{x}]=\mathbf{x}^{\top}\bm{\beta} + \underline{\mu}.
$$
\end{proposition}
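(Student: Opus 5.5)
The approach is to compute, for each fixed $\theta\in\Theta$, the ordinary conditional expectation $\mathbb{E}_{\theta}[y\mid\mathbf{x}]$ using the model structure, and then take the supremum over $\theta$.

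First, fix $\theta$. Under $F_\theta$, model \eqref{eq:lr} gives, by linearity of conditional expectation,
$$
\mathbb{E}_{\theta}[y\mid\mathbf{x}]=\mathbf{x}^{\top}\bm{\beta}+\mathbb{E}_{\theta}[\nu\mid\mathbf{x}]+\mathbb{E}_{\theta}[\varepsilon\mid\mathbf{x}].
$$
Here $\mathbf{x}^{\top}\bm{\beta}$ is $\mathbf{x}$-measurable. Since $(\nu,\varepsilon)$ is independent of $\mathbf{x}$, the last two terms reduce to $\mathbb{E}_\theta[\nu]$ and $\mathbb{E}_\theta[\varepsilon]$. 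The assumption $\hat{\mathbb{E}}[\varepsilon]=\hat{\mathbb{E}}[-\varepsilon]=0$ means $\sup_\theta\mathbb{E}_\theta[\varepsilon]=0$ and $\inf_\theta\mathbb{E}_\theta[\varepsilon]=0$. Hence $\mathbb{E}_\theta[\varepsilon]=0$ for every $\theta$, and therefore $\mathbb{E}_{\theta}[y\mid\mathbf{x}]=\mathbf{x}^{\top}\bm{\beta}+\mathbb{E}_\theta[\nu]$.

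Second, take the supremum over $\theta$. The term $\mathbf{x}^{\top}\bm{\beta}$ does not depend on $\theta$: the distribution of $\mathbf{x}$ is the fixed $F_{\mathbf{x}}$, and the uncertainty sits only in $(\nu,\varepsilon)$. So it factors out of the supremum, giving
$$
\hat{\mathbb{E}}[y\mid\mathbf{x}]=\mathbf{x}^{\top}\bm{\beta}+\sup_\theta\mathbb{E}_\theta[\nu]=\mathbf{x}^{\top}\bm{\beta}+\overline{\mu}.
$$
For the lower bound, the same computation applied to $-y$ gives
$$
-\hat{\mathbb{E}}[-y\mid\mathbf{x}]=-\sup_\theta\bigl(-\mathbf{x}^{\top}\bm{\beta}-\mathbb{E}_\theta[\nu]\bigr)=\mathbf{x}^{\top}\bm{\beta}+\inf_\theta\mathbb{E}_\theta[\nu]=\mathbf{x}^{\top}\bm{\beta}+\underline{\mu},
$$
using $\underline{\mu}=-\hat{\mathbb{E}}[-\nu]=\inf_\theta\mathbb{E}_\theta[\nu]$.

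The main obstacle is interpretive rather than computational. One must justify that the independence of $(\nu,\varepsilon)$ from $\mathbf{x}$ holds under each $F_\theta$, so that each conditional expectation reduces to an unconditional one. One must also justify that the pointwise supremum of conditional expectations is taken for each fixed value of $\mathbf{x}$, so that no measurability or version issues arise. I would handle both by reading the model as specifying, for each $\theta$, a joint law with $\mathbf{x}\sim F_{\mathbf{x}}$ independent of $(\nu,\varepsilon)$. Under that reading the conditional expectations have canonical versions that are affine in $\mathbf{x}$, and the supremum is attained or approximated uniformly in $\mathbf{x}$.
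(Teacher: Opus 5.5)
Your proposal is correct and follows essentially the same route as the paper: use the independence of $(\nu,\varepsilon)$ from $\mathbf{x}$ to reduce $\mathbb{E}_\theta[y\mid\mathbf{x}]$ to $\mathbf{x}^{\top}\bm{\beta}+\mathbb{E}_\theta[\nu]+\mathbb{E}_\theta[\varepsilon]$, then take the supremum (or, for the lower bound, the infimum) over $\theta$. Your explicit argument that $\hat{\mathbb{E}}[\varepsilon]=\hat{\mathbb{E}}[-\varepsilon]=0$ forces $\mathbb{E}_\theta[\varepsilon]=0$ for every $\theta$ supplies a step the paper's chain of equalities uses without stating it.
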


\begin{remark}
 According to Proposition \ref{prop:1}, the conditional mean of $y$ given $\mathbf{x}$ lies in the interval $[\mathbf{x}^{\top}\bm{\beta} + \underline{\mu}, \mathbf{x}^{\top}\bm{\beta} + \overline{\mu}]$, where the upper and lower bounds are directly determined by the mean uncertainty interval $[\underline{\mu}, \overline{\mu}]$.
\end{remark}

\subsection{Our approach: EMMB}\label{sec:2.2}
In this paper, our goal is to obtain consistent estimates of $\bm{\beta}, \underline{\mu}$ and $\overline{\mu}$ using observations from linear regression model \eqref{eq:lr}. We consider a scenario where the data are generated from an unknown number $k$ of samples. Specifically, given $n$ independent observations $\{(\mathbf{x}_{i},y_{i})\}_{i=1}^{n}$, the $k$-sample setup is characterized by the next assumption.

\begin{assumption}\label{assum:ksample}
The independent observations $\{(\mathbf{x}_{i},y_{i})\}_{i=1}^{n}$ are composed of $k$ samples. For $1\leq j\leq k$, the sample size of the $j$th sample is $n_{j}$ and $\sum\limits_{j=1}^{k}n_{j}=n$. In the $j$th sample, $\nu_{i}$ follows the one-point distribution $\delta_{c_{j}}$, where $c_{j}\in [\underline{\mu},\overline{\mu}]$ with $\underline{\mu}=\min\limits_{1\leq j\leq k} c_{j}$ and $\overline{\mu}=\max\limits_{1\leq j\leq k}c_{j}$; $\varepsilon_{i}$ follows the normal distribution $\mathcal{N}(0, \sigma_{j}^{2})$ and $\sigma_{j}\in [\underline{\sigma},\overline{\sigma}]$ with $0 < \underline{\sigma} \leq \overline{\sigma} < \infty$. However, we do not know the number of the samples and the exact locations of the change points of the samples.
\end{assumption}

Borrowing the idea from the EM algorithm \citep{dempster1977maximum}, we treat $\{\nu_{i}\}_{i=1}^{n}$ as missing data and employ an alternating maximization procedure to estimate $\bm{\beta}$,  which we still refer to as the EM algorithm. With $\hat{\bm{\beta}}_{\mathrm{EM}}$ obtained via the EM algorithm, we then estimate $\underline{\mu}$ and $\overline{\mu}$ using the moving block method. We now propose the following two-step method, which is called Expectation-Maximization with Moving Block (EMMB).

\medskip

\noindent\textbf{Step 1. The EM algorithm for estimating }$\bm{\beta}$

Since the number of samples and the exact locations of change points are unknown, let $n_{0}<\min\limits_{1\leq j\leq k} n_{j}$ be a small window size and $T=n/n_{0}$. In practice, $n_{0}=10$ or 20 is recommended.

(a) The E-step: given $\bm{\beta}^{(s)}$ , find $a_{j}$ such that
$$
a_{j}=\arg\min_{a} \sum_{i=1+(j-1)n_{0}}^{jn_{0}} \left\{y_i - (\mathbf{x}_{i}^{\top}\bm{\beta}^{(s)}+a)\right\}^{2}.
$$
Differentiating the objective function with respect to $a$ and setting the derivative to zero yields
$$
\sum_{i=1+(j-1)n_{0}}^{jn_{0}} \left\{y_i - (\mathbf{x}_{i}^{\top}\bm{\beta}^{(s)}+a_{j})\right\}=0,
$$
which has the explicit solution
$$
a_{j}=\frac{1}{n_0}\sum_{i=1+(j-1)n_{0}}^{jn_{0}} (y_i - \mathbf{x}_{i}^{\top}\bm{\beta}^{(s)}).
$$
Then let $\nu_{1+(j-1)n_{0}}^{(s+1)}=\cdots=\nu_{jn_{0}}^{(s+1)}=a_{j}$, for $1\leq j\leq T$.

~\\

(b) The M-step: given $\bm{\nu}^{(s+1)}$, find $\bm{\beta}^{(s+1)}$ such that
$$
\bm{\beta}^{(s+1)}=\arg\min_{\bm{\beta}} \sum_{i=1}^n \left\{y_i - (\mathbf{x}_{i}^{\top}\bm{\beta}+\nu_{i}^{(s+1)})\right\}^{2},
$$
Differentiating the objective function with respect to $\bm{\beta}$ and setting the gradient to zero gives the normal equations
$$
\sum_{i=1}^n \{ y_i -(\mathbf{x}_{i}^{\top}\bm{\beta}^{(s+1)}+\nu_{i}^{(s+1)})\}\mathbf{x}_{i}=\mathbf{0}.
$$
Initialize $\bm{\beta}^{(0)}$, then repeat (a)-(b) until convergence  (e.g., until $\|\bm{\beta}^{(s+1)} - \bm{\beta}^{(s)}\| < \delta$ for a prescribed tolerance $\delta$) to obtain the final estimate $\hat{\bm{\beta}}_{\mathrm{EM}}$.

\medskip

\noindent\textbf{Step 2.  The moving block procedure for estimating $\underline{\mu}$ and $\overline{\mu}$}

The data $\{(\mathbf{x}_{i},y_{i})\}_{i=1}^{n}$ are scanned sequentially into $L=n-w+1$ overlapping moving blocks of a given block length $w$ with $w > n_0$. For $1\leq l \leq L$, denote the data in the $l$th block by $B_{l}=\{(\mathbf{x}_{i},y_{i})\}_{i=l}^{l+w-1}$, then given $\hat{\bm{\beta}}_{\mathrm{EM}}$ from step 1, find $a_{l}$ such that
$$
\sum_{i=l}^{l+w-1} \left\{y_i - (\mathbf{x}_{i}^{\top}\hat{\bm{\beta}}_{\mathrm{EM}}+a_{l})\right\}=0.
$$
Let
$$
\hat{\underline{\mu}}=\min_{1\leq l\leq L}a_{l}\qquad \text{ and }\qquad \hat{\overline{\mu}}=\max_{1\leq l\leq L}a_{l}.
$$

\begin{remark}
The choice of $w$ involves a bias-variance trade-off: a larger $w$ reduces the variance of each $a_l$ but increases the risk that a block spans multiple change points, which introduces bias; a smaller $w$ is more likely to lie within a homogeneous segment but yields noisier $a_l$. In practice, $w$ can be selected based on domain-specific knowledge, or by data-driven methods such as cross-validation.
\end{remark}

In contrast to the method of \cite{yang2023linear}, which estimates the regression coefficients by selecting a single block with minimum variance, our EMMB approach utilizes all observations and does not require any prior knowledge of group structures or change points. Finally, we summarize the complete EMMB approach in Algorithm~\ref{alg:EMMB}.

\begin{algorithm}
\caption{EMMB approach}
\label{alg:EMMB}
\begin{algorithmic}[1]
\Statex \textbf{Input:} data $\{(\mathbf{x}_{i},y_{i})\}_{i=1}^{n}$, small window size $n_{0}$, maximum number of iterations $max\_iter$, convergence tolerance $tol$, block length $w$ ($w > n_0$)

\State \textbf{Initialize:}  $\bm{\beta} \gets \bm{\beta}^{(0)}$ (e.g., the OLS estimate)
\State $T \gets  n / n_0$

\For{$iter = 1$ \textbf{to} $max\_iter$}

    \State $\bm{\beta}_{old} \gets \bm{\beta}$
    \For{$j = 1$ \textbf{to} $T$}
         \State Solve for $a_{j}$ satisfying: $\displaystyle\sum_{i=1+(j-1)n_{0}}^{jn_{0}} \left\{y_i - ( \mathbf{x}_{i}^{\top}\bm{\beta}+a_{j})\right\}=0$
         \State Let $\nu_{1+(j-1)n_{0}}=\cdots=\nu_{jn_{0}}=a_{j}$
    \EndFor
    \State Solve for $\bm{\beta}$ satisfying: $\displaystyle\sum_{i=1}^n \{ y_i - (\mathbf{x}_{i}^{\top}\bm{\beta}+\nu_{i}) \}\mathbf{x}_{i}=0$

    \State $\bm{\beta}_{diff} \gets \|\bm{\beta} - \bm{\beta}_{old}\|_2$
    \If{$\bm{\beta}_{diff} < tol$}
        \State \textbf{break}
    \EndIf
\EndFor
\State $\hat{\bm{\beta}}_{\mathrm{EM}} \gets \bm{\beta}$
\State $L \gets n-w+1$
\For{$l = 1$ \textbf{to} $L$}
      \State Solve for $a_{l}$ satisfying: $\displaystyle\sum_{i=l}^{l+w-1} \left\{y_i - (\mathbf{x}_{i}^{\top}\hat{\bm{\beta}}_{\mathrm{EM}}+a_{l})\right\}=0$
\EndFor
\State $\hat{\overline{\mu}} \gets \max_{1\leq l\leq L}a_{l}$
\State $\hat{\underline{\mu}} \gets \min_{1\leq l\leq L}a_{l}$

\Statex \textbf{Output:} $\hat{\bm{\beta}}_{\mathrm{EM}}$, $\hat{\underline{\mu}}$ and $\hat{\overline{\mu}}$
\end{algorithmic}
\end{algorithm}

\section{Theory}\label{sec:3}
\subsection{Theoretical properties of $\hat{\bm{\beta}}_{\mathrm{EM}}$}

In this subsection, we investigate the theoretical properties of the EM estimator $\hat{\bm{\beta}}_{\mathrm{EM}}$. We first introduce the following assumption, which guarantees that each window of size $n_0$ lies entirely within a single underlying group and thus ensures the validity of the EM algorithm.

\begin{assumption}\label{assum:n0}
For $1\leq j\leq k$, $n_{j}$ is divisible by the small window size $n_{0}$.
\end{assumption}

For the analysis that follows, we introduce additional notation. Let $T = n/n_0$ denote the number of complete, non-overlapping windows. Under Assumptions \ref{assum:ksample} and \ref{assum:n0}, each window of size $n_0$ is contained entirely within a single true group, so that $ n = T n_0$. For $j = 1,\ldots,T$, let $\tau_j^2$ denote the error variance within the $j$-th window. Then $\tau_j^2 = \sigma_{g(j)}^2$, where $g(j) \in \{1,\ldots,k\}$ indexes the true group to which window $j$ belongs.

Then, we establish the following lemma showing that $\hat{\bm{\beta}}_{\mathrm{EM}}$ admits an explicit closed-form solution.

\begin{lemma}\label{lem:explicit}
Under Assumptions \ref{assum:ksample} and \ref{assum:n0}, the EM estimator $\hat{\bm{\beta}}_{\mathrm{EM}}$ admits an explicit closed-form solution, given by

\[
\hat{\bm{\beta}}_{\mathrm{EM}} = \bm{\beta} + \Bigl[ \sum_{j=1}^{T} \sum_{i=1+(j-1)n_0}^{j n_0} (\mathbf{x}_i - \bar{\mathbf{x}}_j)(\mathbf{x}_i - \bar{\mathbf{x}}_j)^{\top} \Bigr]^{-1} \sum_{j=1}^{T} \sum_{i=1+(j-1)n_0}^{j n_0} (\mathbf{x}_i - \bar{\mathbf{x}}_j)\varepsilon_i,
\]
where $\bar{\mathbf{x}}_j = \frac{1}{n_0} \sum_{i=1+(j-1)n_0}^{j n_0} \mathbf{x}_i$ for $1\leq j\leq T$.
\end{lemma}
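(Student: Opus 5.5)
The plan is to treat the EM iteration as an alternating (block coordinate) minimization of the single joint criterion
\[
Q(\bm{\beta},a_1,\ldots,a_T)=\sum_{j=1}^{T}\sum_{i=1+(j-1)n_0}^{jn_0}\bigl\{y_i-\mathbf{x}_i^{\top}\bm{\beta}-a_j\bigr\}^2 .
\]
We then identify the limit (the fixed point) explicitly. First, we profile out the intercepts. For fixed $\bm{\beta}$, the E-step gives $a_j=\bar y_j-\bar{\mathbf{x}}_j^{\top}\bm{\beta}$, where $\bar y_j=n_0^{-1}\sum_{i} y_i$ over window $j$. Substituting this into the M-step normal equations gives the fixed-point equation
\[
\sum_{j=1}^T\sum_{i}\bigl\{(y_i-\bar y_j)-(\mathbf{x}_i-\bar{\mathbf{x}}_j)^{\top}\bm{\beta}^{*}\bigr\}\mathbf{x}_i=\mathbf{0}.
\]
Since $\sum_i (\mathbf{x}_i-\bar{\mathbf{x}}_j)=\mathbf{0}$ and $\sum_i (y_i-\bar y_j)=0$ within each window, we can replace $\mathbf{x}_i$ by $\mathbf{x}_i-\bar{\mathbf{x}}_j$. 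This yields the within-window (fixed-effects) least squares solution
\[
\bm{\beta}^{*}=S^{-1}\sum_{j}\sum_i(\mathbf{x}_i-\bar{\mathbf{x}}_j)(y_i-\bar y_j), \qquad S=\sum_j\sum_i(\mathbf{x}_i-\bar{\mathbf{x}}_j)(\mathbf{x}_i-\bar{\mathbf{x}}_j)^{\top},
\]
provided $S$ is invertible. Invertibility holds almost surely, or can be taken as an implicit regularity condition, since the paper's later assumptions presumably include it.

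Second, we plug in the model. By Assumptions \ref{assum:ksample} and \ref{assum:n0}, every window $j$ lies in a single group $g(j)$, so $\nu_i=c_{g(j)}$ is constant on the window. Hence $y_i-\bar y_j=(\mathbf{x}_i-\bar{\mathbf{x}}_j)^{\top}\bm{\beta}+(\varepsilon_i-\bar\varepsilon_j)$, and the intercept cancels exactly. Using $\sum_i(\mathbf{x}_i-\bar{\mathbf{x}}_j)\bar\varepsilon_j=\mathbf{0}$, we get $\bm{\beta}^{*}=\bm{\beta}+S^{-1}\sum_j\sum_i(\mathbf{x}_i-\bar{\mathbf{x}}_j)\varepsilon_i$, which is the stated formula. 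This cancellation is exactly where Assumption \ref{assum:n0} is essential.

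The main obstacle is justifying that the iteration actually returns $\bm{\beta}^{*}$, i.e.\ that it converges to it from any initialization. We will handle this by writing the composed update as a linear map. The M-step gives $\bm{\beta}^{(s+1)}=(X^\top X)^{-1}X^\top(\mathbf{y}-\bm{\nu}^{(s+1)})$, and $\bm{\nu}^{(s+1)}=P(\mathbf{y}-X\bm{\beta}^{(s)})$, where $P$ is the block-averaging projection. Hence
\[
\bm{\beta}^{(s+1)}-\bm{\beta}^{*}=(X^\top X)^{-1}X^\top P X\,(\bm{\beta}^{(s)}-\bm{\beta}^{*}).
\]
The matrix $M=(X^\top X)^{-1}X^\top PX$ is similar to $(X^\top X)^{-1/2}X^\top PX(X^\top X)^{-1/2}$. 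Its eigenvalues therefore lie in $[0,1]$, and they are strictly less than $1$ exactly when $X^\top(I-P)X=S$ is positive definite. Thus $M^s\to 0$ geometrically, and $\bm{\beta}^{*}$ is the unique fixed point, so $\hat{\bm{\beta}}_{\mathrm{EM}}=\bm{\beta}^{*}$. Alternatively, one can note that $Q$ is a strictly convex quadratic in $\bm{\beta}$ after profiling, and invoke convergence of exact block coordinate descent for such quadratics.
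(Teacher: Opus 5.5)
Your derivation of the closed form is the same as the paper's proof: you substitute $\hat a_j=\bar y_j-\bar{\mathbf{x}}_j^{\top}\bm{\beta}$ into the M-step normal equations, center using the vanishing within-window sums, assume $S$ is invertible, and let the window-constant intercept cancel under Assumption~\ref{assum:n0}. Your contraction argument via $M=(X^{\top}X)^{-1}X^{\top}PX$ is also correct, and it supplies something the paper omits, since the paper only characterizes the limit ``upon convergence'' and never shows that the iteration converges. One small caveat: invertibility of $S$ does not hold almost surely in general (it fails with positive probability for discrete covariates), so keep it as an explicit assumption, as the paper does.
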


Next, we establish the unbiasedness of the EM estimator $\hat{\bm{\beta}}_{\mathrm{EM}}$. To this end, we impose the following independence assumption. In addition to ensuring unbiasedness, this condition also facilitates the subsequent proofs of consistency and asymptotic normality.

\begin{assumption}\label{assum:mindependent}
$\mathbf{x}, \nu$ and $\varepsilon$ are mutually independent.
\end{assumption}

\begin{theorem}[Unbiasedness of $\hat{\bm{\beta}}_{\mathrm{EM}}$]\label{thm:unbiasednessLEM}
Under Assumptions \ref{assum:ksample}--\ref{assum:mindependent}, the EM estimator $\hat{\bm{\beta}}_{\mathrm{EM}}$ is unbiased for $\bm{\beta}$, i.e., $\mathbb{E}[\hat{\bm{\beta}}_{\mathrm{EM}}] = \bm{\beta}$.
\end{theorem}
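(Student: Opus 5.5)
Starting from Lemma \ref{lem:explicit}, write
\[
\hat{\bm{\beta}}_{\mathrm{EM}} - \bm{\beta} = S_n^{-1}\sum_{j=1}^{T}\sum_{i=1+(j-1)n_0}^{jn_0}(\mathbf{x}_i-\bar{\mathbf{x}}_j)\varepsilon_i,
\qquad
S_n=\sum_{j=1}^{T}\sum_{i=1+(j-1)n_0}^{jn_0}(\mathbf{x}_i-\bar{\mathbf{x}}_j)(\mathbf{x}_i-\bar{\mathbf{x}}_j)^{\top}.
\]
The intercepts $\nu_i$ have already disappeared: under Assumptions \ref{assum:ksample} and \ref{assum:n0}, $\nu_i$ is constant within each window and is removed by centering at $\bar{\mathbf{x}}_j$. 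It therefore suffices to show that $\mathbb{E}[S_n^{-1}(\mathbf{x}_i-\bar{\mathbf{x}}_j)\varepsilon_i]=\mathbf{0}$ for every $i$ and $j$.

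The plan is to condition on the design $X=(\mathbf{x}_1,\ldots,\mathbf{x}_n)$. By Assumption \ref{assum:mindependent} and the independence of observations, the vector $(\varepsilon_1,\ldots,\varepsilon_n)$ is independent of $X$. Under Assumption \ref{assum:ksample}, each $\varepsilon_i\sim\mathcal{N}(0,\tau_{j}^2)$ for $i$ in window $j$, where $\tau_j^2$ is a fixed (deterministic) variance determined by the group structure. Hence $\mathbb{E}[\varepsilon_i\mid X]=0$. Since $S_n^{-1}(\mathbf{x}_i-\bar{\mathbf{x}}_j)$ is $\sigma(X)$-measurable, we get
\[
\mathbb{E}[\hat{\bm{\beta}}_{\mathrm{EM}}-\bm{\beta}\mid X]=\sum_{j}\sum_{i}S_n^{-1}(\mathbf{x}_i-\bar{\mathbf{x}}_j)\,\mathbb{E}[\varepsilon_i\mid X]=\mathbf{0}.
\]
The tower property then gives $\mathbb{E}[\hat{\bm{\beta}}_{\mathrm{EM}}]=\bm{\beta}$.

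The main obstacle is technical: justifying that $S_n$ is invertible almost surely and that $\hat{\bm{\beta}}_{\mathrm{EM}}$ is integrable, so that the conditional and unconditional expectations exist. Invertibility is implicit in Lemma \ref{lem:explicit}; I would state it as a standing condition, which holds almost surely when $F_{\mathbf{x}}$ is continuous and $n-T\ge p$.

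For integrability, I would argue conditionally. Given $X$, the estimator $\hat{\bm{\beta}}_{\mathrm{EM}}-\bm{\beta}$ is Gaussian with mean zero and covariance
\[
S_n^{-1}\Bigl(\sum_{j}\tau_j^2\sum_i(\mathbf{x}_i-\bar{\mathbf{x}}_j)(\mathbf{x}_i-\bar{\mathbf{x}}_j)^{\top}\Bigr)S_n^{-1}\preceq \overline{\sigma}^2 S_n^{-1},
\]
using $\tau_j\le\overline{\sigma}$. So the conditional expectation is well defined, and the conditional mean is zero regardless of finiteness of moments of $S_n^{-1}$. For the unconditional statement, I would either assume $\mathbb{E}\|S_n^{-1}\|^{1/2}<\infty$ or interpret unbiasedness conditionally on $X$. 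Alternatively, a symmetry argument avoids moments altogether: $(\varepsilon_i)\mapsto(-\varepsilon_i)$ leaves the joint law unchanged and maps $\hat{\bm{\beta}}_{\mathrm{EM}}-\bm{\beta}$ to its negative, so its mean is zero whenever it exists.
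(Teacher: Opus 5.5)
Your proposal is correct and is essentially the paper's proof. Both start from the closed form in Lemma~\ref{lem:explicit}, condition on the design $\mathbf{X}$, use $\mathbb{E}[\varepsilon_i\mid\mathbf{X}]=0$ from Assumption~\ref{assum:mindependent}, and finish with the tower property. Your added remarks on invertibility of $S_n$ and on integrability (e.g.\ $\mathbb{E}\|S_n^{-1}\|^{1/2}<\infty$) sharpen a step the paper passes over when it simply ``takes unconditional expectation''; without such a condition $\hat{\bm{\beta}}_{\mathrm{EM}}$ can fail to be integrable in small designs. One correction: for almost-sure invertibility you need $F_{\mathbf{x}}$ to put no mass on affine hyperplanes (e.g.\ absolutely continuous), not merely to be continuous.
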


Having established the unbiasedness of $\hat{\bm{\beta}}_{\mathrm{EM}}$, we next investigate its large-sample properties. Specifically, we require the following standard regularity conditions to prove the consistency and asymptotic normality of the EM estimator $\hat{\bm{\beta}}_{\mathrm{EM}}$.

\begin{assumption}\label{assum:x}
$\{\mathbf{x}_i\}_{i=1}^{n}$ are i.i.d. with \(\mathbb{E}[\|\mathbf{x}\|^3] < \infty\), and the covariance matrix \(\bm{\Sigma}_{\mathbf{xx}} = \mathrm{Var}(\mathbf{x}) = \mathbb{E}[(\mathbf{x} - \mathbb{E}[\mathbf{x}])(\mathbf{x} - \mathbb{E}[\mathbf{x}])^{\top}]\) is positive definite.
\end{assumption}

\begin{assumption}\label{assum:fixedn0}
As $n \to \infty$, $n_0$ is fixed and $T \to \infty$.
\end{assumption}

\begin{theorem}[Consistency of $\hat{\bm{\beta}}_{\mathrm{EM}}$]\label{thm:consistencyLEM fixed}
Under Assumptions \ref{assum:ksample}--\ref{assum:fixedn0}, the EM estimator $\hat{\bm{\beta}}_{\mathrm{EM}}$ is consistent for \(\bm{\beta}\), i.e.,
\[
\hat{\bm{\beta}}_{\mathrm{EM}} \xrightarrow{P} \bm{\beta} \quad \text{ as } n \to \infty.
\]
\end{theorem}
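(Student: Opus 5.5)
Starting from the closed form in Lemma \ref{lem:explicit}, we write
\[
\hat{\bm{\beta}}_{\mathrm{EM}}-\bm{\beta}=\Bigl[\frac{1}{T}\sum_{j=1}^{T}\mathbf{S}_j\Bigr]^{-1}\frac{1}{T}\sum_{j=1}^{T}\mathbf{u}_j,
\qquad
\mathbf{S}_j=\sum_{i\in W_j}(\mathbf{x}_i-\bar{\mathbf{x}}_j)(\mathbf{x}_i-\bar{\mathbf{x}}_j)^{\top},\quad
\mathbf{u}_j=\sum_{i\in W_j}(\mathbf{x}_i-\bar{\mathbf{x}}_j)\varepsilon_i,
\]
where $W_j=\{1+(j-1)n_0,\ldots,jn_0\}$ denotes the $j$th window. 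The plan is to show two things. First, the denominator average converges in probability to a positive definite matrix. Second, the numerator average converges in probability to $\mathbf{0}$. The continuous mapping theorem, applied via continuity of matrix inversion at a nonsingular limit, then gives the claim.

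\textbf{Denominator.} By Assumption \ref{assum:x}, the $\mathbf{x}_i$ are i.i.d. and $n_0$ is fixed, so the $\mathbf{S}_j$ are i.i.d. across $j$. Each $\mathbf{S}_j$ has mean $(n_0-1)\bm{\Sigma}_{\mathbf{xx}}$, which is the usual within-sample scatter identity, and has a finite first moment since $\mathbb{E}\|\mathbf{x}\|^2<\infty$. The weak law of large numbers as $T\to\infty$ (Assumption \ref{assum:fixedn0}) gives $T^{-1}\sum_j\mathbf{S}_j\xrightarrow{P}(n_0-1)\bm{\Sigma}_{\mathbf{xx}}$. This limit is positive definite because $n_0\ge 2$ and $\bm{\Sigma}_{\mathbf{xx}}\succ 0$; if $n_0=1$ the closed form is undefined, so $n_0\ge2$ is implicitly required.

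\textbf{Numerator.} This is the main obstacle: the $\mathbf{u}_j$ are independent but not identically distributed, because the window variances $\tau_j^2=\sigma_{g(j)}^2$ vary with the unknown group structure. I will therefore avoid the i.i.d. law of large numbers and use a direct second-moment (Chebyshev) argument. By Assumption \ref{assum:mindependent}, $\varepsilon$ is independent of $\mathbf{x}$ and has mean zero within each group, so $\mathbb{E}[\mathbf{u}_j]=\mathbf{0}$. Conditioning on the $\mathbf{x}$'s gives
\[
\mathrm{Var}(\mathbf{u}_j)=\tau_j^2\,\mathbb{E}[\mathbf{S}_j]=\tau_j^2(n_0-1)\bm{\Sigma}_{\mathbf{xx}}\preceq\overline{\sigma}^2(n_0-1)\bm{\Sigma}_{\mathbf{xx}}.
\]
The $\mathbf{u}_j$ are independent across windows, since windows use disjoint observations, so
\[
\mathrm{Var}\Bigl(T^{-1}\sum_{j=1}^{T}\mathbf{u}_j\Bigr)\preceq T^{-1}\overline{\sigma}^2(n_0-1)\bm{\Sigma}_{\mathbf{xx}}\to 0 .
\]
Chebyshev's inequality applied componentwise then yields $T^{-1}\sum_j\mathbf{u}_j\xrightarrow{P}\mathbf{0}$. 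The uniform bound $\sigma_j\le\overline{\sigma}$ from Assumption \ref{assum:ksample} is exactly what makes this step work despite heterogeneity and an unknown $k$.

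Combining the two limits, the product converges in probability to $[(n_0-1)\bm{\Sigma}_{\mathbf{xx}}]^{-1}\mathbf{0}=\mathbf{0}$. This uses Slutsky's lemma, together with the fact that the inverse exists with probability tending to one. Hence $\hat{\bm{\beta}}_{\mathrm{EM}}\xrightarrow{P}\bm{\beta}$.
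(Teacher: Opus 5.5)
Your proposal is correct and follows essentially the same route as the paper: the closed form from Lemma~\ref{lem:explicit}, the i.i.d.\ law of large numbers giving $T^{-1}\sum_j\mathbf{S}_j\to(n_0-1)\bm{\Sigma}_{\mathbf{xx}}$, and a weak law for the independent, mean-zero numerator terms with variances $(n_0-1)\tau_j^2\bm{\Sigma}_{\mathbf{xx}}$ uniformly bounded via $\overline{\sigma}^2$, combined by Slutsky's lemma. Your explicit Chebyshev computation spells out the weak law the paper only cites, and your remarks that $n_0\ge 2$ is needed and that the inverse exists with probability tending to one are small but welcome clarifications.
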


\begin{theorem}[Asymptotic Normality of $\hat{\bm{\beta}}_{\mathrm{EM}}$]\label{thm:ANLEM fixed}
Under Assumptions \ref{assum:ksample}--\ref{assum:fixedn0}, the EM estimator $\hat{\bm{\beta}}_{\mathrm{EM}}$ satisfies
\[
\sqrt{n}\bigl(\hat{\bm{\beta}}_{\mathrm{EM}} - \bm{\beta}\bigr) \xrightarrow{d} \mathcal{N}\bigl(\mathbf{0}, \, \frac{n_{0}}{n_0 - 1} \tilde{\sigma}_{T}^2\bm{\Sigma}_{\mathbf{xx}}^{-1}\bigr) \quad \text{ as } n \to \infty,
\]
where $\tilde{\sigma}_{T}^2=\frac{1}{T}\sum_{j=1}^{T} \tau_{j}^2\in[\underline{\sigma}^2,\overline{\sigma}^2]$ and $\tau_j^2$ is the error variance within the $j$-th window.
\end{theorem}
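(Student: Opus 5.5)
Starting from the closed form in Lemma \ref{lem:explicit}, I would write
\[
\sqrt{n}\bigl(\hat{\bm{\beta}}_{\mathrm{EM}}-\bm{\beta}\bigr)=\Bigl[\tfrac1n S_n\Bigr]^{-1}\tfrac{1}{\sqrt n}U_n,\qquad S_n=\sum_{j=1}^{T}\sum_{i\in W_j}(\mathbf{x}_i-\bar{\mathbf{x}}_j)(\mathbf{x}_i-\bar{\mathbf{x}}_j)^{\top},\quad U_n=\sum_{j=1}^{T}\sum_{i\in W_j}(\mathbf{x}_i-\bar{\mathbf{x}}_j)\varepsilon_i,
\]
where $W_j=\{1+(j-1)n_0,\ldots,jn_0\}$. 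The proof then splits into three parts: a law of large numbers for the matrix $S_n/n$, a central limit theorem for $U_n/\sqrt n$ with heterogeneous window variances, and Slutsky's lemma to combine them.

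\textbf{The matrix term.} I would write $S_n=\sum_{j=1}^T A_j$ with $A_j=\sum_{i\in W_j}(\mathbf{x}_i-\bar{\mathbf{x}}_j)(\mathbf{x}_i-\bar{\mathbf{x}}_j)^{\top}$. By Assumption \ref{assum:x}, the $A_j$ are i.i.d. across windows, and by the usual sample-covariance identity $\mathbb{E}[A_j]=(n_0-1)\bm{\Sigma}_{\mathbf{xx}}$. Since $n_0$ is fixed and $T\to\infty$ (Assumption \ref{assum:fixedn0}), the WLLN gives
\[
\frac1n S_n=\frac{1}{n_0}\cdot\frac1T\sum_{j=1}^{T} A_j\xrightarrow{P}\frac{n_0-1}{n_0}\bm{\Sigma}_{\mathbf{xx}},
\]
which is positive definite. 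This is essentially the step already used for Theorem \ref{thm:consistencyLEM fixed}.

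\textbf{The score term.} I would write $U_n=\sum_{j=1}^T \mathbf{Z}_j$ with $\mathbf{Z}_j=\sum_{i\in W_j}(\mathbf{x}_i-\bar{\mathbf{x}}_j)\varepsilon_i$. These vectors are independent across $j$ but not identically distributed, because $\tau_j^2$ varies with the window. Conditional on the $\mathbf{x}$'s and using Assumption \ref{assum:mindependent}, $\mathbb{E}[\mathbf{Z}_j]=\mathbf{0}$ and $\mathrm{Var}(\mathbf{Z}_j)=\tau_j^2\,\mathbb{E}[A_j]=\tau_j^2(n_0-1)\bm{\Sigma}_{\mathbf{xx}}$. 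Hence
\[
\mathrm{Var}\bigl(n^{-1/2}U_n\bigr)=\frac{n_0-1}{n_0}\,\tilde{\sigma}_T^2\,\bm{\Sigma}_{\mathbf{xx}}.
\]
I would then apply the Cramér--Wold device and the Lyapunov CLT for triangular arrays to $\bm{\lambda}^{\top}\mathbf{Z}_j$, with Lyapunov exponent $\delta=1$. This needs $\sum_j\mathbb{E}|\bm{\lambda}^{\top}\mathbf{Z}_j|^3=O(T)$, and $\mathbb{E}|\bm{\lambda}^{\top}\mathbf{Z}_j|^3$ is bounded uniformly in $j$ for three reasons: $\mathbb{E}\|\mathbf{x}\|^3<\infty$, the $\varepsilon_i$ are Gaussian with $\tau_j\le\overline{\sigma}$, and $n_0$ is fixed. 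The variance sum is bounded below by $T(n_0-1)\underline{\sigma}^2\bm{\lambda}^{\top}\bm{\Sigma}_{\mathbf{xx}}\bm{\lambda}\asymp T$. So the Lyapunov ratio is $O(T)/T^{3/2}\to0$.

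\textbf{The main obstacle.} $\tilde{\sigma}_T^2$ depends on $T$ and need not converge, so the limit covariance is itself $n$-dependent. I would handle this by proving normality in standardized form: $\bigl(\tfrac{n_0-1}{n_0}\tilde{\sigma}_T^2\bm{\Sigma}_{\mathbf{xx}}\bigr)^{-1/2}n^{-1/2}U_n\xrightarrow{d}\mathcal{N}(\mathbf{0},I_p)$, which is legitimate because $\tilde{\sigma}_T^2\in[\underline{\sigma}^2,\overline{\sigma}^2]$ is bounded away from $0$ and $\infty$. If $\tilde{\sigma}_T^2$ has a limit, the statement holds literally; otherwise it is read along subsequences or in this standardized sense.

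\textbf{Combining.} Slutsky's lemma together with $(S_n/n)^{-1}\xrightarrow{P}\tfrac{n_0}{n_0-1}\bm{\Sigma}_{\mathbf{xx}}^{-1}$ gives
\[
\frac{n_0}{n_0-1}\bm{\Sigma}_{\mathbf{xx}}^{-1}\cdot\frac{n_0-1}{n_0}\tilde{\sigma}_T^2\bm{\Sigma}_{\mathbf{xx}}\cdot\frac{n_0}{n_0-1}\bm{\Sigma}_{\mathbf{xx}}^{-1}=\frac{n_0}{n_0-1}\tilde{\sigma}_T^2\bm{\Sigma}_{\mathbf{xx}}^{-1},
\]
which is the asserted covariance.
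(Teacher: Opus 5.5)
Your proposal is correct and follows essentially the same route as the paper: the closed form from Lemma~\ref{lem:explicit}, a law of large numbers for the within-window Gram matrix, a Lyapunov CLT for the independent but non-identically distributed window scores (uniform third-moment bound, variance sum bounded below by $T(n_0-1)\underline{\sigma}^2$ times a positive constant), and Slutsky's lemma. Normalizing by $n$ instead of $T$ and using Cram\'er--Wold instead of a multivariate Lyapunov theorem are immaterial, and your observation that $\tilde{\sigma}_T^2$ need not converge, so the result must be read in standardized form or along subsequences, is a genuine gain in rigor over the paper, which states a limit with a $T$-dependent covariance without comment.
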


Notably, the asymptotic variance of the EM estimator $\hat{\bm{\beta}}_{\mathrm{EM}}$ differs from that of the OLS estimator under a constant intercept only by a multiplicative factor $n_{0}/(n_{0} - 1)$. Since $n_0$ is fixed at a moderate value (e.g., $10$ or $20$) in practice, this factor is close to 1 and has little effect on inference. This indicates that the EMMB method effectively eliminates the influence of intercept heterogeneity on the estimation of $\bm{\beta}$, achieving an efficiency comparable to that of OLS in the absence of such heterogeneity.

Furthermore, since the true subsample sizes $n_j$ are typically much larger than $n_0$, even if $n_j$ is not exactly divisible by $n_0$, at most $k-1$ windows may contain observations from two adjacent subsamples. The number of such boundary windows is asymptotically negligible, and their impact on $\hat{\bm{\beta}}_{\mathrm{EM}}$ vanishes as the total sample size $n$ grows. Therefore, the theoretical properties of $\hat{\bm{\beta}}_{\mathrm{EM}}$ remain practically meaningful and robust to minor violations of Assumption \ref{assum:n0}.

\subsection{Theoretical properties of $\hat{\underline{\mu}}$ and $\hat{\overline{\mu}}$}

To establish a firm theoretical foundation for $\hat{\underline{\mu}}$ and $\hat{\overline{\mu}}$ obtained by the proposed EMMB method, we first define the population-level targets that our procedure naturally estimates.

\begin{definition}[$w$-robust upper and lower bounds]\label{def:wbounds}
The observed data $\{(\mathbf{x}_{i},y_{i})\}_{i=1}^{n}$ are partitioned sequentially into $L=n-w+1$ overlapping moving blocks of length $w$. For the $l$th block $B_{l}=\{(\mathbf{x}_{i},y_{i})\}_{i=l}^{l+w-1}$, define its expected average intercept as
\[
\mu(B_{l}) = \frac{1}{w} \sum_{i=l}^{l+w-1} \mathbb{E}[\nu_i],
\]
where $\nu_i = c_j$ if the $i$th observation belongs to the $j$th homogeneous sample(as described in Assumption \ref{assum:ksample}). The $w$-robust upper bound $\overline{\mu}_w$ and $w$-robust lower bound $\underline{\mu}_w$ are then defined respectively by
\[
\overline{\mu}_w = \max_{1\leq l\leq L} \mu(B_{l}), \qquad
\underline{\mu}_w = \min_{1\leq l\leq L} \mu(B_{l}).
\]
\end{definition}

\begin{remark}
The quantities $\overline{\mu}_w$ and $\underline{\mu}_w$ represent the maximum and minimum expected average intercept across all sliding windows of length $w$. If a homogeneous subsample with the global extremum $\overline{\mu}$ (or $\underline{\mu}$) has length at least $w$, then $\overline{\mu}_w$ (or $\underline{\mu}_w$) coincides with the global maximum $\overline{\mu}$ (or global minimum $\underline{\mu}$). Thus, estimating $\overline{\mu}_w$ and $\underline{\mu}_w$ provides a conservative yet stable measure of risk over a horizon of length $w$, which is often more relevant for practical decision-making than the historical global extremum.
\end{remark}

To establish the consistency of the EMMB estimators $\hat{\overline{\mu}}$ and $\hat{\underline{\mu}}$, we impose the following standard regularity condition on the block length $w$, which ensures the convergence of the block averages and the validity of the extreme value analysis.

\begin{assumption}\label{assum:w}
As the total sample size $n \to \infty $, the block length $w$ satisfies
\[
w \to \infty, \quad w = o(n), \quad \text{and} \quad \frac{w}{\log n} \to \infty.
\]
\end{assumption}

\begin{theorem}[Consistency of $\hat{\overline{\mu}}$ and $\hat{\underline{\mu}}$]\label{thm:consistency_bounds}
Under Assumptions \ref{assum:ksample}-\ref{assum:w}, the EMMB estimators $\hat{\overline{\mu}}$ and $\hat{\underline{\mu}}$ are consistent for the $w$-robust upper bound $\overline{\mu}_w$ and $w$-robust lower bound $\underline{\mu}_w$, respectively, i.e.,
\[
\hat{\overline{\mu}} \xrightarrow{P} \overline{\mu}_w \quad \text{and} \quad \hat{\underline{\mu}} \xrightarrow{P} \underline{\mu}_w \quad \text{ as } n \to \infty.
\]
\end{theorem}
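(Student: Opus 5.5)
The plan is to decompose each block estimate $a_l$ into its population target plus two error terms, and show that both errors vanish uniformly over all $L$ blocks. Solving the defining equation gives
\[
a_l=\frac1w\sum_{i=l}^{l+w-1}(y_i-\mathbf{x}_i^{\top}\hat{\bm{\beta}}_{\mathrm{EM}})
=\mu(B_l)+\frac1w\sum_{i=l}^{l+w-1}\varepsilon_i-\bar{\mathbf{x}}_{(l)}^{\top}(\hat{\bm{\beta}}_{\mathrm{EM}}-\bm{\beta}),
\]
where $\bar{\mathbf{x}}_{(l)}=w^{-1}\sum_{i=l}^{l+w-1}\mathbf{x}_i$. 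Here I use that $\nu_i=c_{j}$ is deterministic under Assumption \ref{assum:ksample}, so $\frac1w\sum_{i}\nu_i=\mu(B_l)$ exactly. The max and min maps are $1$-Lipschitz in the sup norm: $|\max_l a_l-\max_l\mu(B_l)|\le\max_l|a_l-\mu(B_l)|$, and likewise for the minimum. It therefore suffices to show
\[
\max_{1\le l\le L}\Bigl|\frac1w\sum_{i=l}^{l+w-1}\varepsilon_i\Bigr|\xrightarrow{P}0
\qquad\text{and}\qquad
\max_{1\le l\le L}\bigl|\bar{\mathbf{x}}_{(l)}^{\top}(\hat{\bm{\beta}}_{\mathrm{EM}}-\bm{\beta})\bigr|\xrightarrow{P}0 .
\]

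For the noise term I will use Gaussianity. Each block mean of errors is exactly $\mathcal{N}(0,s_l^2)$ with $s_l^2=w^{-2}\sum_i\sigma_{g(i)}^2\le\overline{\sigma}^2/w$. A union bound over the $L\le n$ blocks gives
\[
P\Bigl(\max_l\Bigl|\frac1w\sum_{i=l}^{l+w-1}\varepsilon_i\Bigr|>\eta\Bigr)\le 2n\exp\bigl(-w\eta^2/(2\overline{\sigma}^2)\bigr)=2\exp\bigl(\log n-w\eta^2/(2\overline{\sigma}^2)\bigr).
\]
This tends to $0$ for every $\eta>0$ precisely because $w/\log n\to\infty$ (Assumption \ref{assum:w}). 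This is the step where the extreme-value condition on $w$ is genuinely needed.

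For the coefficient term I use the Cauchy--Schwarz bound $\max_l\|\bar{\mathbf{x}}_{(l)}\|\,\|\hat{\bm{\beta}}_{\mathrm{EM}}-\bm{\beta}\|$. Theorem \ref{thm:ANLEM fixed} gives $\|\hat{\bm{\beta}}_{\mathrm{EM}}-\bm{\beta}\|=O_P(n^{-1/2})$. It then suffices that $\max_l\|\bar{\mathbf{x}}_{(l)}\|=o_P(\sqrt n)$. A crude bound already works: $\max_l\|\bar{\mathbf{x}}_{(l)}\|\le\max_{i\le n}\|\mathbf{x}_i\|$. Since $\mathbb{E}\|\mathbf{x}\|^3<\infty$ (in fact a finite second moment suffices), the standard argument $P(\max_i\|\mathbf{x}_i\|>\eta\sqrt n)\le nP(\|\mathbf{x}\|^2>\eta^2 n)\to0$ shows $\max_i\|\mathbf{x}_i\|=o_P(\sqrt n)$. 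Hence the second term is $o_P(1)$ uniformly in $l$, with no need for a uniform law of large numbers over blocks.

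Combining the two uniform bounds through the Lipschitz inequality yields $\hat{\overline{\mu}}-\overline{\mu}_w\xrightarrow{P}0$ and $\hat{\underline{\mu}}-\underline{\mu}_w\xrightarrow{P}0$. The main obstacle is the uniform control of the Gaussian block noise over all $L$ overlapping blocks with heterogeneous variances. The union bound handles this without exploiting the overlap structure, so the condition $w/\log n\to\infty$ is exactly what is needed. The assumption $w=o(n)$ plays no essential role beyond guaranteeing $L\to\infty$, so that $\overline{\mu}_w$ and $\underline{\mu}_w$ are meaningful.
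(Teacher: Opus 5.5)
Your proof is correct and shares the paper's skeleton: the same decomposition of $a_l$ into $\mu(B_l)$ plus a covariate term and a noise term, the same Gaussian tail bound with a union bound over the $L\le n$ blocks (where $w/\log n\to\infty$ is used), and the same $1$-Lipschitz property of $\max$ and $\min$. The genuine difference is in the covariate term.

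The paper bounds that term by $\|\hat{\bm{\beta}}_{\mathrm{EM}}-\bm{\beta}\|\cdot\max_l w^{-1}\sum_{i=l}^{l+w-1}\|\mathbf{x}_i\|$. It uses only consistency of $\hat{\bm{\beta}}_{\mathrm{EM}}$, and asserts that this maximal moving average is $O_p(1)$ because $\mathbb{E}\|\mathbf{x}\|<\infty$. That assertion is not implied by Assumptions~\ref{assum:x} and~\ref{assum:w}. The maximal moving average is at least $w^{-1}\max_{i\le n}\|\mathbf{x}_i\|$. Take $P(\|\mathbf{x}\|>z)=z^{-4}$ for $z\ge 1$ (so $\mathbb{E}\|\mathbf{x}\|^3<\infty$) and $w=(\log n)^2$. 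Then $\max_{i\le n}\|\mathbf{x}_i\|$ is of exact order $n^{1/4}$, and the maximal moving average diverges.

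You instead trade the uniform bound for a rate, $\|\hat{\bm{\beta}}_{\mathrm{EM}}-\bm{\beta}\|=O_P(n^{-1/2})$, paired with the crude bound $\max_l\|\bar{\mathbf{x}}_{(l)}\|\le\max_{i\le n}\|\mathbf{x}_i\|=o_P(\sqrt n)$. This needs only a second moment and no interplay between $w$ and the tails of $\mathbf{x}$. So your argument holds under exactly the stated assumptions, while the paper's version of this step tacitly needs lighter tails or faster growth of $w$.

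All you use from Theorem~\ref{thm:ANLEM fixed} is tightness of $\sqrt n(\hat{\bm{\beta}}_{\mathrm{EM}}-\bm{\beta})$. This is safe even though the stated limit depends on $T$, because it does so only through the bounded quantity $\tilde\sigma_T^2$. Tightness also follows directly from Lemma~\ref{lem:explicit}: the score $\sum_{j=1}^{T}\sum_{i=1+(j-1)n_0}^{jn_0}(\mathbf{x}_i-\bar{\mathbf{x}}_j)\varepsilon_i$ has mean zero and covariance $(n_0-1)\bm{\Sigma}_{\mathbf{xx}}\sum_{j}\tau_j^2=O(n)$, while $n$ times the inverse of the bracketed matrix is $O_P(1)$.
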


Theorem~\ref{thm:consistency_bounds} guarantees that our procedure consistently estimates the $w$-robust bounds $\overline{\mu}_w$ and $\underline{\mu}_w$. In practice, if the block length $w$ is chosen to reflect a relevant planning horizon (e.g., one quarter in financial risk assessment), then $\hat{\overline{\mu}}$ and $\hat{\underline{\mu}}$ provide statistically reliable estimates of the worst- and best-case average outcomes over any consecutive $w$-period within the observed data. This makes the estimator particularly suitable for conservative planning and stress testing, where guarding against sustained adverse periods is more critical than reacting to isolated extreme points.

\subsection{Extension to settings with known group structure} \label{subsec:known_groups}

While our EMMB approach is developed for the case of unknown change points, it readily adapts to situations where the group structure (i.e., the partition of the data into homogeneous subsamples) is known. In such a setting, one can simply take each known group as a single block and apply the EM algorithm to these $k$ blocks to obtain $\hat{\bm{\beta}}$  and the group-specific intercept estimates $\{a_j\}_{j=1}^{k}$. Natural estimators for the lower and upper bounds of the intercepts are then

$$
\hat{\underline{\mu}}=\min_{1\leq j\leq k}a_{j}\qquad \text{ and }\qquad \hat{\overline{\mu}}=\max_{1\leq j\leq k}a_{j}.
$$

We can similarly prove the theoretical properties of $\hat{\bm{\beta}}$, $\hat{\underline{\mu}}$, and $\hat{\overline{\mu}}$. Specifically, $\hat{\bm{\beta}}$ is still unbiased, and as the sample size of each group tends to infinity (i.e., $n_j \to \infty$), $\hat{\bm{\beta}}$ remains consistent and asymptotically normal, while $\hat{\underline{\mu}}$ and $\hat{\overline{\mu}}$ are consistent for the corresponding population bounds. Furthermore, when the group structure is known, the M-step of the EM algorithm can be weighted by the inverse of the within-group variances to obtain a more efficient minimum-variance-type estimator for $\bm{\beta}$.

This extension highlights the flexibility of the proposed method: it provides a unified estimation strategy that works seamlessly whether the heterogeneity of the data is latent (with unknown change points) or manifest (with known groups).

\section{Simulation studies}\label{sec:4}
In this section, we conduct several simulation experiments to systematically evaluate and compare the finite sample performance of the proposed EMMB method with the ordinary least squares (OLS). To facilitate clear comparisons, we consider the bias and mean squared error (MSE) of the regression coefficient estimates for both methods. In addition, we present the mean of the lower and upper intercept estimators $\hat{\underline{\mu}}$ and $\hat{\overline{\mu}}$ obtained by the EMMB method, as well as the mean of the overall intercept estimator $\hat{\beta_{0}}$ obtained by OLS. All simulation results are based on 1,000 repetitions.

\begin{simulation}\label{sim:1}
We consider the following linear regression model
$$
y = \mathbf{x}^{\top}\bm{\beta} + \nu + \varepsilon,
$$
where $\bm{\beta}=(1.5,-0.6)^{\top}$ and the components of $\mathbf{x}$ are independent and identically distributed as $\mathcal{N}(0,1)$. $\nu$ is a random intercept and $\varepsilon$ has variance uncertainty. The data are composed of k samples with equal size $m$ and thus $n=km$. Specifically, $\{\nu_{i}\}_{i=1}^{n}$ and $\{\varepsilon_{i}\}_{i=1}^{n}$ are generated as follows:
\begin{itemize}
  \item let $c_{1}=-10, c_{k}=30$ and $c_{j}$ is generated from the uniform distribution $U(-10,30)$ for $2\leq j\leq k-1$;
  \item $\sigma_{j}$ is generated from the uniform distribution $U(1,2)$ for $1\leq j\leq k$;
  \item let $\nu_{i}=c_{j}$ and $\varepsilon_{i}$ is generated from the normal distribution $\mathcal{N}(0, \sigma_{j}^{2})$  for $ 1+(j-1)m\leq i\leq jm, 1\leq j\leq k$.
\end{itemize}
This design ensures that the intercept range $[-10,30]$ is exactly realized, with the endpoints $-10$ and $30$ explicitly attained. The numerical
results associated with this simulation are reported in Table \ref{table:1}.
\end{simulation}

\begin{table}[tbp]
\centering
\caption{Simulation Results over 1,000 repetitions for Simulation \ref{sim:1} with $m=100$ and varying $k$}
\label{table:1}
\begin{tabular}{l cc cc c}
\toprule
\multirow{2}{*}{Method} & \multicolumn{2}{c}{$\hat{\beta}_{1}$} & \multicolumn{2}{c}{$\hat{\beta}_{2}$} & $(\hat{\underline{\mu}},\hat{\overline{\mu}})$ \\
\cmidrule(lr){2-3} \cmidrule(lr){4-5} \cmidrule(lr){6-6}
 & Bias & MSE & Bias & MSE & Mean \\
\midrule
\multicolumn{6}{l}{$(n,w,n_{0})=(200,80,10)$} \\
EMMB & $0.0027$ & $0.0130$  &  $-0.0039$ & $0.0126$  & $(-10.0740, 30.0810)$  \\
OLS & $-0.0067$ & $2.1061$  &$-0.0379$ & $2.1386$ & $10.0033$  \\[0.5em]
\multicolumn{6}{l}{$(n,w,n_{0})=(400,80,10)$} \\
EMMB & $0.0011$ & $0.0068$  &  $-0.0024$ & $0.0063$  & $(-10.0935, 30.0710)$  \\
OLS &  $0.0207$ & $0.6678$  &   $0.0104$ & $0.6140$ & $9.9935$  \\[0.5em]
\multicolumn{6}{l}{$(n,w,n_{0})=(1000,80,10)$} \\
EMMB & $0.0017$  & $0.0025$  &$0.0022$ & $0.0024$  & $(-10.0779, 30.0826)$ \\
OLS & $0.0079$ & $0.1840$  & $0.0170$ & $0.1832$ & $9.9643$\\
\bottomrule
\end{tabular}
\end{table}

Table \ref{table:1} reveals that OLS suffers from severely inflated MSE when the sample size is small ($n=200$), a problem that gradually improves with larger sample size. In contrast, the EMMB method maintains low MSE across all sample sizes and consistently outperforms OLS. Notably, both methods exhibit negligible bias, indicating that the superiority of EMMB stems from its substantially smaller variance. Furthermore, the EMMB method accurately recovers the true intercept range $[-10,30]$, while OLS provides only a point estimate that masks the underlying heterogeneity.

\begin{simulation}\label{sim:2}
We consider the following linear regression model
$$
y = \mathbf{x}^{\top}\bm{\beta} + \nu + \varepsilon,
$$
where $\bm{\beta}=(3, -1.5, 0.6)^{\top}$ and the covariates $\mathbf{x}$ follow a multivariate normal distribution with zero mean and covariance matrix
$$
\bm{\Sigma} = \begin{pmatrix}
4 & 2 & 0 \\
2 & 4 & 0 \\
0 & 0 & 9
\end{pmatrix}.
$$
$\nu$ is a random intercept and $\varepsilon$ has variance uncertainty. The data are composed of k samples with equal size $m$ and thus $n=km$. The generation of $\{\nu_i\}_{i=1}^n$ and $\{\varepsilon_i\}_{i=1}^n$ follows the same rules as Simulation \ref{table:1}. The numerical
results associated with this simulation are reported in Table \ref{table:2}.
\end{simulation}

\begin{table}[tbp]
\centering
\caption{Simulation Results over 1,000 repetitions for Simulation \ref{sim:2} with $m=100$ and varying $k$}
\label{table:2}
\begin{tabular}{l cc cc cc c}
\toprule
\multirow{2}{*}{Method} & \multicolumn{2}{c}{$\hat{\beta}_{1}$} & \multicolumn{2}{c}{$\hat{\beta}_{2}$}& \multicolumn{2}{c}{$\hat{\beta}_{3}$} & $(\hat{\underline{\mu}},\hat{\overline{\mu}})$ \\
\cmidrule(lr){2-3} \cmidrule(lr){4-5} \cmidrule(lr){6-7} \cmidrule(lr){8-8}
 & Bias & MSE & Bias & MSE& Bias & MSE & Mean \\
\midrule
\multicolumn{8}{l}{$(n,w,n_{0})=(200,80,10)$} \\
EMMB & $0.0010$ & $0.0043$  & $-0.0009$ & $0.0045$ &  $0.0006$ & $0.0016$  & $(-10.0705, 30.0791)$  \\
OLS &  $0.0059$ & $0.7694$  & $-0.0023$ & $0.6789$ & $0.0036$ & $0.2490$  & $10.0026$  \\[0.5em]
\multicolumn{8}{l}{$(n,w,n_{0})=(400,80,10)$} \\
EMMB & $0$ & $0.0022$  &  $0.0028$ & $0.0021$&   $-0.0003$ &  $0.0007$  & $(-10.0783, 30.0836)$  \\
OLS &   $0.0098$ & $0.2017$ &  $0.0051$ &  $0.2061$&  $-0.0055$ &  $0.0765$ & $9.9269$  \\[0.5em]
\multicolumn{8}{l}{$(n,w,n_{0})=(1000,80,10)$} \\
EMMB & $0.0028$  & $0.0009$  &$-0.0027$ & $0.0009$&$0.0003$ & $0.0003$  & $(-10.0819, 30.0883)$ \\
OLS &  $0.0065$ & $0.0610$  &  $-0.0077$ & $0.0624$& $0.0032$ & $0.0187$ & $10.2325$\\
\bottomrule
\end{tabular}
\end{table}

Simulation \ref{sim:2} introduces correlation among the covariates and allows them to have different variances, thereby increasing the signal-to-noise ratio relative to Simulation \ref{sim:1}. As shown in Table \ref{table:2}, the MSE of OLS decreases considerably, especially for small sample sizes, yet remains non-negligible. In contrast, the MSE of EMMB stays consistently low across all settings. The bias of both methods is small, indicating that the superior performance of EMMB is driven primarily by its much smaller variance. Furthermore, the EMMB method accurately recovers the true intercept range $[-10,30]$, while OLS provides only a point estimate that masks the underlying heterogeneity. These results confirm that the advantages of EMMB are not limited to low-signal scenarios but persist even when the signal is stronger and the covariates are more complex.

\begin{simulation}\label{sim:3}
We consider the following linear regression model
$$
y = \mathbf{x}^{\top}\bm{\beta} + \beta_{0} + \varepsilon,
$$
where $\bm{\beta}=(1.5,-0.6)^{\top}$ and the components of $\mathbf{x}$ are independent and identically distributed as $\mathcal{N}(0,1)$. The intercept is fixed at $\beta_{0}=-3$  and the error term $\varepsilon$ follows the normal distribution $\mathcal{N}(0,1)$. The numerical results associated with this simulation are reported in Table \ref{table:3}.
\end{simulation}

\begin{table}[tbp]
\centering
\caption{Simulation Results over 1,000 repetitions for Simulation \ref{sim:3} without uncertainty ($\beta_0=-3$)}
\label{table:3}
\begin{tabular}{l cc cc c}
\toprule
\multirow{2}{*}{Method} & \multicolumn{2}{c}{$\hat{\beta}_{1}$} & \multicolumn{2}{c}{$\hat{\beta}_{2}$} & $(\hat{\underline{\mu}},\hat{\overline{\mu}})$ \\
\cmidrule(lr){2-3} \cmidrule(lr){4-5} \cmidrule(lr){6-6}
 & Bias & MSE & Bias & MSE & Mean \\
\midrule
\multicolumn{6}{l}{$(n,w,n_{0})=(200,80,10)$} \\
EMMB & $0.0006$ & $0.0058$  & $-0.0010$ & $0.0058$  & $(-3.1494 -2.8610)$  \\
OLS & $0.0014$ & $0.0053$  & $-0.0003$ & $0.0052$ & $-3.0057$  \\[0.5em]
\multicolumn{6}{l}{$(n,w,n_{0})=(400,180,10)$} \\
EMMB & $-0.0005$ & $0.0029$  &   $-0.0015$ & $0.0027$  & $(-3.0927, -2.9150)$  \\
OLS &  $-0.0003$ & $0.0025$  &  $-0.0016$ &  $0.0024$ & $-3.0029$  \\[0.5em]
\multicolumn{6}{l}{$(n,w,n_{0})=(1000,480,10)$} \\
EMMB &  $-0.0002$  & $0.0012$  &$-0.0003$ & $0.0012$  & $(-3.0530, -2.9480)$ \\
OLS & $0.0001$ & $0.0010$  & $-0.0003$ & $0.0010$ & $-3.0004$\\
\bottomrule
\end{tabular}
\end{table}

From Table \ref{table:3}, we can see that when the intercept is constant and the error term has no variance uncertainty, the performance of the EMMB method is essentially on par with that of OLS. Both methods exhibit small bias and MSE across all sample sizes. Furthermore, the difference between $\hat{\underline{\mu}}$ and $\hat{\overline{\mu}}$ obtained by the EMMB method is very small, and as $w$ increases with the sample size $n$, these bounds gradually converge toward the true intercept value $\beta_{0}=-3$.

In summary, the EMMB method substantially outperforms OLS under Simulations \ref{sim:1} and \ref{sim:2}, where intercept heterogeneity and heteroscedasticity are present, while it performs comparably to OLS in Simulation \ref{sim:3}, where the classical linear model assumptions hold. Together, these three simulation experiments demonstrate the superiority of the proposed EMMB method: it achieves a substantial reduction in MSE when heterogeneity exists, without incurring any noticeable penalty when it does not.

\section{Real data analysis}\label{sec:5}
In this section, we apply the proposed EMMB method to model $\mathrm{PM}_{2.5}$ concentrations in Beijing, comparing its performance against the ordinary least squares (OLS) benchmark. The analysis is based on a widely used public dataset, the \textbf{Beijing PM2.5} dataset, from the UCI Machine Learning Repository. This dataset integrates hourly $\mathrm{PM}_{2.5}$ measurements recorded at the US Embassy in Beijing with contemporaneous meteorological data from the Beijing Capital International Airport.

The dataset spans from January 1, 2010 to December 31, 2014, covering 43,824 hourly observations. The original variables include $\mathrm{PM}_{2.5}$ concentration (pm2.5), temporal indicators (year, month, day, hour), and meteorological factors: dew point (DEWP), temperature (TEMP), pressure (PRES), combined wind direction (cbwd), cumulated wind speed (Iws), as well as cumulated hours of snow (Is) and cumulated hours of rain (Ir). cbwd includes four categories: northwest (NW), northeast (NE), southeast (SE), and calm and variable (cv). Iws is the cumulative wind speed from the start of each wind direction to the current hour; when the wind direction changes, Iws resets to the hourly wind speed of the new direction. Missing values, denoted as "NA" in the original data, appear only in $\mathrm{PM}_{2.5}$ measurements, while all meteorological variables are complete.

\subsection{Data preprocessing}
To construct a daily time series for our analysis, the hourly measurements were aggregated to the daily level. The aggregation rules are as follows:

\begin{itemize}
\item \text{pm2.5\_mean}: For each day, the daily mean $\mathrm{PM}_{2.5}$ concentration was calculated only if at least 18 hourly observations were available; otherwise, the daily value was marked as missing (NA). This threshold follows the completeness criteria of the U.S. Environmental Protection Agency (EPA) for ambient air quality monitoring \citepalias{EPA_ECATT}, which requires a minimum of 18 valid hourly observations (75\% of a full day) to compute a representative daily average for $\mathrm{PM}_{2.5}$. The daily average was computed as the arithmetic mean of the valid hourly measurements on that day.

\item \text{pm2.5\_lag4h}: The average $\mathrm{PM}_{2.5}$ concentration over the last four hours of the previous day, calculated using the same 75\% completeness criterion (i.e., at least three valid hourly observations).

\item \text{DEWP\_mean}, \text{TEMP\_mean}, \text{PRES\_mean}: Since DEWP, TEMP, and PRES have no missing values at the hourly level, daily means were computed directly from all 24 hourly observations.

\item \text{NE\_Iws\_inc}, \text{NW\_Iws\_inc}, \text{SE\_Iws\_inc}: To obtain physically interpretable daily measures, we first recovered the hourly wind speed by taking the difference in Iws between consecutive hours within the same wind direction segment. For each wind direction category (NE, NW, SE), the daily cumulative wind speed was then obtained by summing these hourly speeds over all hours belonging to that direction on that day. If a direction did not occur on a given day, its corresponding variable was set to zero. These variables reflect the total potential for pollutant transport or dispersion from each direction throughout the day.

\item \text{cv\_hours}: Since cv conditions---characterized by low hourly wind speeds (e.g., 0.45 m/s or 0.89 m/s)---tend to positively influence $\mathrm{PM}_{2.5}$ concentrations, using daily cumulative wind speed could misleadingly imply that higher wind speed leads to higher $\mathrm{PM}_{2.5}$ levels. To avoid this, we instead use the daily count of cv hours. This variable ranges from 0 to 24 and captures the duration of calm or variable conditions that favor pollutant accumulation.

\item \text{rain\_hours\_48h}: The total number of hours with rainfall over a 48-hour period, encompassing the preceding calendar day and the current day, was derived from the hourly cumulative rainfall indicator Ir. This variable accounts for the persistent cleansing effect of rainfall on air quality, which often extends into the day following a rainfall event.

\item \text{heating}: This dummy variable was constructed to account for the effect of residential heating, which is not directly available in the original dataset. The heating period in Beijing varies with temperature and can start earlier than November 15 or be extended after March 15. For example, it started on November 3 in 2012, and ended on March 22, 18, and 17 in 2010, 2012, and 2013, respectively. The indicator equals 1 for days within the effective heating period and 0 otherwise.
\end{itemize}
After aggregating all variables to the daily level, days with missing values (i.e., those where either \text{pm2.5\_mean} or \text{pm2.5\_lag4h} was not available) were excluded from the final dataset. Is was not used in this study due to its sparse occurrence and the focus on other meteorological drivers. The final daily dataset consists of 1,710 complete observations, with variables including \text{pm2.5\_mean}, \text{pm2.5\_lag4h}, \text{DEWP\_mean}, \text{TEMP\_mean}, \text{PRES\_mean}, \text{NE\_Iws\_inc}, \text{NW\_Iws\_inc}, \text{SE\_Iws\_inc}, \text{cv\_hours}, \text{rain\_hours\_48h} and \text{heating}.

\subsection{Results}
To account for the nonlinear cleansing effect of rainfall on $\mathrm{PM}_{2.5}$ concentrations, we applied a logarithmic transformation to the 48-hour cumulative rainfall hours, denoted as $\text{rain\_48h\_log1p} = \log(1+\text{rain\_hours\_48h})$. To facilitate interpretation of the interaction terms and reduce multicollinearity, all continuous meteorological variables (\text{DEWP\_mean}, \text{TEMP\_mean}, \text{PRES\_mean}, \text{cv\_hours}, \text{NE\_Iws\_inc}, \text{NW\_Iws\_inc}, \text{SE\_Iws\_inc}, and \text{rain\_48h\_log1p}) were centered prior to model fitting, while \text{pm2.5\_lag4h} and \text{heating} were left in their original scale. For notational simplicity, these variable names continue to denote their centered versions in the subsequent analysis.

According to \cite{liang2015assessing}, Beijing is hemmed in by Taihang Mountains to the west and Yanshan Mountains to the north, with few heavily polluting industries in the northern regions. In contrast, the southern and eastern parts of Beijing, situated on the North China Plain, host a high density of heavy industries that consume substantial amounts of coal and other fossil fuels. Under southerly wind conditions, the surrounding mountains obstruct air flow, leading to the accumulation of polluted air. To capture the seasonally varying impact of southeasterly winds, we introduced two interaction terms: $\text{SE\_Summer} = \text{SE\_Iws\_inc} \times \mathbf{1}_{\text{summer}}$ and $\text{SE\_Winter} = \text{SE\_Iws\_inc} \times \mathbf{1}_{\text{winter}}$.

We consider the following linear regression model for the daily $\mathrm{PM}_{2.5}$ concentration:
\begin{equation}\label{eq:model}
\begin{aligned}
\text{pm2.5\_mean} &= \beta_1 \text{pm2.5\_lag4h}+ \beta_2 \text{heating} +  \beta_3 \text{DEWP\_mean} \\
&+ \beta_4 \text{TEMP\_mean} + \beta_5 \text{PRES\_mean}+ \beta_6 \text{rain\_48h\_log1p}  \\
&+ \beta_7 \text{NE\_Iws\_inc} + \beta_8 \text{NW\_Iws\_inc}+ \beta_9 \text{SE\_Iws\_inc}\\
&+ \beta_{10} \text{cv\_hours} + \beta_{11} \text{SE\_Summer} + \beta_{12} \text{SE\_Winter}+\nu +  \varepsilon,
\end{aligned}
\end{equation}
where the random intercept term $\nu$ captures the day-specific baseline pollution level that may vary over time, and the error term $\varepsilon$ accounts for random fluctuations with potentially heterogeneous variance.

In practice, daily emission levels are not constant, which introduces uncertainty in the intercept $\nu$. Ordinary least squares (OLS) ignores this uncertainty by treating the intercept as a fixed constant $\beta_{0}$. In contrast, the proposed EMMB method assumes that the intercept is constant within each small window of size $n_{0}$, while allowing it to change across windows. The EMMB procedure first uses an EM algorithm to estimate the coefficient vector $\bm{\beta}$, and then applies a moving-block procedure to obtain more accurate estimates of the lower and upper bounds $\underline{\mu}$ and $\overline{\mu}$ of the intercept.

\begin{table}[t!]
\setlength{\abovecaptionskip}{0.1cm}
\setlength{\belowcaptionskip}{0.2cm}
\begin{center}
\caption{Parameter estimation and model fit for $\mathrm{PM}_{2.5}$ data with $n_{0}=10$ and $w=20$}
\label{table:model fit}
\begin{tabular}{l c c c c }
\toprule
 & \multicolumn{2}{c}{OLS} & \multicolumn{2}{c}{EMMB}\\
\cmidrule(lr){2-3} \cmidrule(lr){4-5}
 Parameters& Coef. & $p$-value & Coef. & $p$-value \\
\midrule
$\beta_1$ (\text{pm2.5\_lag4h})      & $0.4516$ & $<0.001$ & $0.3368$ &$<0.001$ \\
$\beta_2$ (\text{heating})          &$-2.9836$ & $0.496$ &$28.2025$ &$0.001$ \\
$\beta_3$ (\text{DEWP\_mean})       &$ 2.8768$ & $<0.001$ & $6.0453$ &$<0.001$  \\
$\beta_4$ (\text{TEMP\_mean})       &$-4.8913$ & $<0.001$ &$-5.1519$ &$<0.001$ \\
$\beta_5$ (\text{PRES\_mean})       &$-1.7873$ & $<0.001$ &$-1.6044$ &$<0.001$  \\
$\beta_6$ (\text{rain\_48h\_log1p}) &$-12.7810$& $<0.001$ &$-16.8265$&$<0.001$  \\
$\beta_7$ (\text{NE\_Iws\_inc})     &$-0.9888$ & $<0.001$ &$-0.7420$ &$<0.001$ \\
$\beta_8$ (\text{NW\_Iws\_inc})     &$-0.3383$ & $<0.001$ &$-0.2509$ &$<0.001$\\
$\beta_9$ (\text{SE\_Iws\_inc})     & $0.0028$ & $0.976$ &$-0.3234$ &$0.001$ \\
$\beta_{10}$ (\text{cv\_hours})     & $2.2654$ & $<0.001$ & $1.8224$ &$<0.001$\\
$\beta_{11}$ (\text{SE\_Summer})    & $0.2652$ & $0.042$ & $0.6072$ &$<0.001$ \\
$\beta_{12}$ (\text{SE\_Winter})    &$-0.1733$ & $0.253$ &$-0.5501$ &$0.001$ \\
$\overline{\mu}$                    & --     &--     &$126.2435$&--  \\
$\underline{\mu}$                   & --     &--     &$-26.0904$&--  \\
$\beta_0$ (Intercept)               & $49.0624$& --    & --     &--   \\
\midrule
$R^2$ & \multicolumn{2}{c}{$0.697$} & \multicolumn{2}{c}{$0.768$} \\
Adjusted $R^2$ & \multicolumn{2}{c}{$0.695$} & \multicolumn{2}{c}{$0.741$} \\
\bottomrule
\end{tabular}
\end{center}

\footnotesize
Note: $p$-values reported as $<0.001$ indicate that the exact $p$-value is less than $0.001$. The $R^2$ and adjusted $R^2$ for EMMB are calculated after including the estimated window-specific intercepts.
\end{table}

Based on 1,710 daily observations, we fitted model \eqref{eq:model} using both OLS and the proposed EMMB method, with results summarized in Table \ref{table:model fit}. By allowing the intercept to vary across windows of size $n_0=10$, the EMMB model substantially improves explanatory power: the $R^2$ increases from 0.697 to 0.768, a gain of 7.1 percentage points. The adjusted $R^2$ also rises from 0.695 to 0.741, confirming that this improvement is not due to overfitting. To estimate the baseline pollution level, we set the moving block length to $w = 20$ (roughly 20 calendar days, as missing data are sparse), which balances smoothing short-term fluctuations with capturing persistent baseline changes. The resulting intercept estimates range from $-26.09$ to $126.24$, a span of nearly 150. This wide range indicates strong temporal heterogeneity in the baseline pollution level that is entirely missed by the constant-intercept OLS model. Such heterogeneity may stem from day-to-day variations in emission intensity, unrecorded meteorological events, or policy interventions.

The heating indicator is insignificant in OLS ($p=0.496$) and even carries a negative sign, contradicting the expected positive effect of winter heating. In contrast, the EMMB method yields a positive and highly significant coefficient ($\beta=28.2025$, $p=0.001$), consistent with the well-documented impact of winter coal combustion. Dew point temperature (DEWP) is recognized as a key driver of $\mathrm{PM}_{2.5}$ pollution, as it promotes the trapping of pollutants under stagnant atmospheric conditions. However, the coefficient of \text{DEWP\_mean} in OLS is substantially underestimated ($\beta=2.8768$), which is less than half of that obtained by EMMB ($\beta=6.0453$). These discrepancies suggest that OLS suffers from coefficient estimation bias due to its fixed intercept assumption, while EMMB effectively isolates the true effects of these critical drivers.

EMMB uncovers state-dependent wind effects obscured by the restrictive constant-intercept framework of OLS. Across both methods, northwesterly (NW) and northeasterly (NE) winds consistently facilitate the removal of $\mathrm{PM}_{2.5}$. In contrast, SE\_Iws\_inc is negligible in OLS ($\beta=0.0028$, $p=0.976$) but emerges as a significant negative predictor in EMMB ($\beta=-0.3234$, $p=0.001$), suggesting that southeasterly winds exert a removal effect once baseline pollution levels are properly accounted for. The interaction terms SE\_Summer and SE\_Winter are both statistically significant in EMMB, with opposite signs: positive in summer ($\beta=0.6072$, $p<0.001$) and negative in winter ($\beta=-0.5501$, $p=0.001$). By comparison, SE\_Summer is only marginally significant ($\beta=0.2652$, $p=0.042$) in OLS, while SE\_Winter is not significant ($\beta=-0.1733$, $p=0.253$).

Estimates for the 4-hour lagged $\mathrm{PM}_{2.5}$ (\text{pm2.5\_lag4h}) and other meteorological variables (\text{TEMP\_mean}, \text{PRES\_mean}, \text{rain\_48h\_log1p}, and \text{cv\_hours}) remain stable across OLS and EMMB, confirming the consistency of core environmental mechanisms. The minor differences in magnitude reflect EMMB's ability to adjust for time-varying intercepts without distorting the underlying physical relationships.

Overall, these results demonstrate that EMMB provides more accurate and interpretable estimates by accounting for unobserved temporal heterogeneity, outperforming the restrictive OLS framework.

\subsection{Discussion of wind effects}
One might wonder: given that few heavily polluting industries lie north of Beijing while a high concentration of such industries is located to the southeast, why does the southeasterly wind appear to exert a stronger removal effect than the northwesterly wind in winter? Examination of hourly data reveals that $\mathrm{PM}_{2.5}$ concentrations reach their lowest levels during prolonged northwesterly winds, whereas extended southeasterly winds first increase and then decrease $\mathrm{PM}_{2.5}$ levels. Importantly, when southeasterly and northwesterly winds alternate, even if northwesterly winds prevail for most of a day, just a few hours of southeasterly wind can reintroduce pollution. Owing to the mountains northwest of Beijing, pollutants transported by southeasterly winds can accumulate; when the wind suddenly shifts to northwesterly, the initial northwesterly flow is not clean but instead carries air with elevated $\mathrm{PM}_{2.5}$ concentrations. Only after the northwesterly wind persists long enough to flush away these accumulated pollutants does it become effective cleansing winds. However, if northwesterly winds persist excessively, $\mathrm{PM}_{2.5}$ concentrations have already fallen to very low levels, diminishing their observable cleaning effect.

In winter, northwesterly winds predominate in Beijing while southeasterly winds are infrequent, causing the pollution-enhancing effect of southeasterly winds to be largely obscured by the dominant removal effect of northwesterly winds. In summer, the opposite pattern emerges: southeasterly winds prevail and northwesterly winds are rare, allowing the pollution-transporting effect of southeasterly winds to manifest clearly. Moreover, even though the coefficient for northwesterly wind is slightly smaller than that for southeasterly wind in spring and autumn, the overall removal effect of northwesterly wind remains substantially stronger. This is because NW\_Iws\_inc spans a much wider range (0 to 238.7) compared to SE\_Iws\_inc (0 to 113.13), meaning that northwesterly winds can exert a far greater cumulative removal effect when they do occur.

In summary, $\mathrm{PM}_{2.5}$ pollution in Beijing arises from both local emissions and regional transport. Southeasterly winds are a major conveyor of externally sourced pollutants, while prolonged northwesterly winds promote efficient $\mathrm{PM}_{2.5}$ removal. However, frequent alternation between the two wind directions can neutralize the removal effect of northwesterly winds and even temporarily transform them into a source of pollution.

\section{Conclusion}\label{sec:6}

In this paper, we consider a linear regression model with distribution uncertainty. Inspired by the theory of sublinear expectation, the model explicitly captures mean uncertainty in the random intercept term and variance uncertainty in the error term. We develop the Expectation-Maximization with Moving Block (EMMB) method to estimate the unknown parameters $\bm{\beta}, \underline{\mu}$ and $\overline{\mu}$, without requiring prior knowledge of group structures or change points. Under mild regularity conditions, we establish the unbiasedness, consistency, and asymptotic normality of $\hat{\bm{\beta}}_{\mathrm{EM}}$, and further prove the consistency of $\hat{\overline{\mu}}$ and $\hat{\underline{\mu}}$ obtained by the moving block procedure. The superiority of the EMMB method is illustrated through extensive simulation studies and a real-data application to PM$_{2.5}$ concentration modeling in Beijing. The real data analysis shows that the EMMB method effectively captures unobserved temporal heterogeneity in baseline pollution levels, yielding more accurate and interpretable estimates than OLS.

There are several promising directions for future research. First, since we assume that both the number and location of change points are unknown, our method currently addresses primarily intercept heterogeneity. This is because when the number and location of change points are unknown, the error variance may vary arbitrarily across small windows (e.g., every 10 observations), making reliable estimation of segment-specific variances infeasible. Consequently, it is not possible to incorporate variance weighting in the EM algorithm to yield a minimum-variance estimator. A natural extension is to develop novel methods that simultaneously accommodate both intercept heterogeneity and heteroscedasticity of the error term, even without prior information on change points. Second, owing to specific technical constraints, we have thus far only leveraged the core idea of sublinear expectation, rather than its full theoretical framework. A more ambitious direction involves formulating a linear regression model within the complete sublinear expectation paradigm---where the random intercept follows a maximal distribution, the error term follows a G-normal distribution, and the variables satisfy nonlinear independence. Such a framework would more faithfully capture the inherent complexities of real-world data, particularly the ubiquitous distributional uncertainty.

\appendix

\section{Proofs}

\begin{proof}[Proof of Proposition \ref{prop:1}]
By the linear regression model \eqref{eq:lr}, we have
\begin{align*}
\hat{\mathbb{E}}[y\mid \mathbf{x}]&=\hat{\mathbb{E}}[\mathbf{x}^{\top}\bm{\beta} + \nu + \varepsilon\mid \mathbf{x}]\\
&=\mathbf{x}^{\top}\bm{\beta}+ \hat{\mathbb{E}}[\nu + \varepsilon\mid \mathbf{x}]\\
&=\mathbf{x}^{\top}\bm{\beta}+ \sup \limits_{\theta \in \Theta} \mathbb{E}_{\theta}[\nu + \varepsilon\mid \mathbf{x}]\\
&=\mathbf{x}^{\top}\bm{\beta}+ \sup \limits_{\theta \in \Theta} \mathbb{E}_{\theta}[\nu + \varepsilon]\\
&=\mathbf{x}^{\top}\bm{\beta}+ \sup \limits_{\theta \in \Theta} \{\mathbb{E}_{\theta}[\nu] + \mathbb{E}_{\theta}[\varepsilon]\}\\
&=\mathbf{x}^{\top}\bm{\beta}+ \sup \limits_{\theta \in \Theta} \mathbb{E}_{\theta}[\nu]\\
&=\mathbf{x}^{\top}\bm{\beta}+ \overline{\mu}.
\end{align*}
Similarly, we have
$$
-\hat{\mathbb{E}}[-y\mid \mathbf{x}]=\inf \limits_{\theta \in \Theta} \mathbb{E}_{\theta}[y\mid \mathbf{x}]=\mathbf{x}^{\top}\bm{\beta} + \underline{\mu}.
$$
This completes the proof.
\end{proof}

\begin{proof}[Proof of Lemma \ref{lem:explicit}]
The EM algorithm consists of the following steps upon convergence:

(a) the E-step: For $1\leq j\leq T$, find \(\hat{a}_j\) such that
\[
\sum_{i=1+(j-1)n_0}^{j n_0} \bigl( y_i - \hat{a}_j - \mathbf{x}_i^{\top} \hat{\bm{\beta}}_{\mathrm{EM}} \bigr) = 0,
\]
and set \(\hat{\nu}_i = \hat{a}_j\) for $1+(j-1)n_0\leq i\leq j n_0$. This gives
\[
\hat{a}_j = \bar{y}_j - \bar{\mathbf{x}}_j^{\top} \hat{\boldsymbol{\beta}}_{\mathrm{EM}},
\]
where \(\bar{y}_j = \frac{1}{n_0} \sum_{i=1+(j-1)n_0}^{j n_0} y_i\) and \(\bar{\mathbf{x}}_j = \frac{1}{n_0} \sum_{i=1+(j-1)n_0}^{j n_0} \mathbf{x}_i\).

~\\

(b) the M-step: The estimator \(\hat{\bm{\beta}}_{\mathrm{EM}}\) satisfies
$$
\sum_{i=1}^{n} \bigl( y_i - \hat{\nu}_i - \mathbf{x}_i^{\top} \hat{\bm{\beta}}_{\mathrm{EM}} \bigr) \mathbf{x}_i = 0.
$$
Substituting  $\hat{\nu}_{i}=\bar{y}_j - \bar{\mathbf{x}}_j^{\top} \hat{\boldsymbol{\beta}}_{\mathrm{EM}}$ into the M-step equation yields
\[
\sum_{j=1}^{T} \sum_{i=1+(j-1)n_0}^{j n_0} \bigl( y_i - \bar{y}_j + \bar{\mathbf{x}}_j^{\top} \hat{\boldsymbol{\beta}}_{\mathrm{EM}} - \mathbf{x}_i^{\top} \hat{\boldsymbol{\beta}}_{\mathrm{EM}} \bigr) \mathbf{x}_i = 0.
\]
Rearranging terms, we obtain
\[
\sum_{j=1}^{T} \sum_{i=1+(j-1)n_0}^{j n_0} (y_i - \bar{y}_j) \mathbf{x}_i = \sum_{j=1}^{T} \sum_{i=1+(j-1)n_0}^{j n_0} \mathbf{x}_i \bigl( \mathbf{x}_i  - \bar{\mathbf{x}}_j \bigr)^{\top} \hat{\bm{\beta}}_{\mathrm{EM}} .
\]
Note that \(\sum_{i=1+(j-1)n_0}^{j n_0} (y_i - \bar{y}_j) = 0\) and \(\sum_{i=1+(j-1)n_0}^{j n_0} (\mathbf{x}_i - \bar{\mathbf{x}}_j) = \mathbf{0}\), we have
\[
\sum_{j=1}^{T} \sum_{i=1+(j-1)n_0}^{j n_0} (\mathbf{x}_i - \bar{\mathbf{x}}_j)(y_i - \bar{y}_j) = \Bigl[ \sum_{j=1}^{T} \sum_{i=1+(j-1)n_0}^{j n_0} (\mathbf{x}_i - \bar{\mathbf{x}}_j)(\mathbf{x}_i - \bar{\mathbf{x}}_j)^{\top} \Bigr] \hat{\bm{\beta}}_{\mathrm{EM}}.
\]
Assume that the matrix \(\sum_{j=1}^{T} \sum_{i=1+(j-1)n_0}^{j n_0} (\mathbf{x}_i - \bar{\mathbf{x}}_j)(\mathbf{x}_i - \bar{\mathbf{x}}_j)^{\top}\) is invertible, we have
\[
\hat{\bm{\beta}}_{\mathrm{EM}} = \Bigl[ \sum_{j=1}^{T} \sum_{i=1+(j-1)n_0}^{j n_0} (\mathbf{x}_i - \bar{\mathbf{x}}_j)(\mathbf{x}_i - \bar{\mathbf{x}}_j)^{\top} \Bigr]^{-1} \sum_{j=1}^{T} \sum_{i=1+(j-1)n_0}^{j n_0} (\mathbf{x}_i - \bar{\mathbf{x}}_j)(y_i - \bar{y}_j).
\]
Due to
$$
y_i - \bar{y}_j = (\mathbf{x}_i - \bar{\mathbf{x}}_j)^{\top} \bm{\beta} + (\varepsilon_i - \bar{\varepsilon}_j),
$$
where \(\bar{\varepsilon}_j = \frac{1}{n_0} \sum_{i=1+(j-1)n_0}^{j n_0} \varepsilon_i\). We have
\[
\hat{\bm{\beta}}_{\mathrm{EM}} = \bm{\beta} + \Bigl[ \sum_{j=1}^{T} \sum_{i=1+(j-1)n_0}^{j n_0} (\mathbf{x}_i - \bar{\mathbf{x}}_j)(\mathbf{x}_i - \bar{\mathbf{x}}_j)^{\top} \Bigr]^{-1} \sum_{j=1}^{T} \sum_{i=1+(j-1)n_0}^{j n_0} (\mathbf{x}_i - \bar{\mathbf{x}}_j)(\varepsilon_i - \bar{\varepsilon}_j).
\]
Note that \(\sum_{i=1+(j-1)n_0}^{j n_0} (\mathbf{x}_i - \bar{\mathbf{x}}_j) = \mathbf{0}\), then
\[
\hat{\bm{\beta}}_{\mathrm{EM}} = \bm{\beta} + \Bigl[ \sum_{j=1}^{T} \sum_{i=1+(j-1)n_0}^{j n_0} (\mathbf{x}_i - \bar{\mathbf{x}}_j)(\mathbf{x}_i - \bar{\mathbf{x}}_j)^{\top} \Bigr]^{-1} \sum_{j=1}^{T} \sum_{i=1+(j-1)n_0}^{j n_0} (\mathbf{x}_i - \bar{\mathbf{x}}_j)\varepsilon_i.
\]
This completes the proof.
\end{proof}

\begin{proof}[Proof of Theorem \ref{thm:unbiasednessLEM}]
Let $\mathbf{X}$ be the $n\times p$ covariate matrix. According to Lemma \ref{lem:explicit}, we have
\[
\hat{\bm{\beta}}_{\mathrm{EM}} = \bm{\beta} + \Bigl[ \sum_{j=1}^{T} \sum_{i=1+(j-1)n_0}^{j n_0} (\mathbf{x}_i - \bar{\mathbf{x}}_j)(\mathbf{x}_i - \bar{\mathbf{x}}_j)^{\top} \Bigr]^{-1} \sum_{j=1}^{T} \sum_{i=1+(j-1)n_0}^{j n_0} (\mathbf{x}_i - \bar{\mathbf{x}}_j)\varepsilon_i.
\]
By Assumption \ref{assum:mindependent}, \(\varepsilon_i\) is independent of \(\mathbf{X}\), hence \(\mathbb{E}[\varepsilon_i \mid \mathbf{X}] =\mathbb{E}[\varepsilon_i] =0\). Therefore,
\[
\mathbb{E}\Bigl[ \sum_{j=1}^{T} \sum_{i=1+(j-1)n_0}^{j n_0} (\mathbf{x}_i - \bar{\mathbf{x}}_j)\varepsilon_i \;\Big|\; \mathbf{X} \Bigr] = 0.
\]
Consequently,
\[
\mathbb{E}\bigl[ \hat{\bm{\beta}}_{\mathrm{EM}} \mid \mathbf{X} \bigr] = \bm{\beta}.
\]
Taking unconditional expectation yields $\mathbb{E}\bigl[ \hat{\bm{\beta}}_{\mathrm{EM}} \bigr] = \bm{\beta}$. Thus, \(\hat{\bm{\beta}}_{\mathrm{EM}}\) is an unbiased estimator for \(\bm{\beta}\). This completes the proof.
\end{proof}

\begin{proof}[Proof of Theorem \ref{thm:consistencyLEM fixed}]
According to Lemma \ref{lem:explicit}, we have
\begin{align*}
\hat{\bm{\beta}}_{\mathrm{EM}} &= \bm{\beta} + \Bigl[ \sum_{j=1}^{T} \sum_{i=1+(j-1)n_0}^{j n_0} (\mathbf{x}_i - \bar{\mathbf{x}}_j)(\mathbf{x}_i - \bar{\mathbf{x}}_j)^{\top} \Bigr]^{-1} \sum_{j=1}^{T} \sum_{i=1+(j-1)n_0}^{j n_0} (\mathbf{x}_i - \bar{\mathbf{x}}_j)\varepsilon_i\\
&= \bm{\beta} + \left( \frac{1}{T}\sum_{j=1}^{T} \mathbf{S}_j \right)^{-1} \frac{1}{T} \sum_{j=1}^{T}\mathbf{t}_j,
\end{align*}
where
$$
\mathbf{S}_j=\sum_{i=1+(j-1)n_0}^{j n_0} (\mathbf{x}_i - \bar{\mathbf{x}}_j)(\mathbf{x}_i - \bar{\mathbf{x}}_j)^{\top}, \quad
\mathbf{t}_j=\sum_{i=1+(j-1)n_0}^{j n_0} (\mathbf{x}_i - \bar{\mathbf{x}}_j)\varepsilon_i.
$$
First, since the windows are disjoint and $\{\mathbf{x}_i\}_{i=1}^{n}$ are i.i.d. with finite second moment, $\{\mathbf{S}_j\}_{j=1}^{T}$ are i.i.d. with

\[
\mathbb{E}[\mathbf{S}_j] = \mathbb{E}\left[ \sum_{i=1+(j-1)n_0}^{j n_0} (\mathbf{x}_i - \bar{\mathbf{x}}_j)(\mathbf{x}_i - \bar{\mathbf{x}}_j)^{\top} \right] = (n_0 - 1) \bm{\Sigma}_{\mathbf{xx}}.
\]
By the Law of Large Numbers for i.i.d. random matrices,
\[
\frac{1}{T}\sum_{j=1}^{T} \mathbf{S}_j \xrightarrow{P} (n_0 - 1)\bm{\Sigma}_{\mathbf{xx}}.
\]
The positive definiteness of $\bm{\Sigma}_{\mathbf{xx}}$ and continuity of the matrix inverse then yield
\begin{equation}\label{eq:bc1}
\left( \frac{1}{T}\sum_{j=1}^{T} \mathbf{S}_j\right)^{-1} \xrightarrow{P} [(n_0 - 1)\bm{\Sigma}_{\mathbf{xx}}]^{-1}.
\end{equation}

Second, recall that $\tau_j^2$ is the error variance within the $j$-th window. Note that $\{(\mathbf{x}_i, \varepsilon_i)\}_{i=1}^{n}$ are independent across $i$, and $\varepsilon_i$ is independent of $\mathbf{x}_i$ for each $i$.
It follows that $\{\mathbf{t}_j\}_{j=1}^{T}$ are independent, satisfying
\[
\mathbb{E}[\mathbf{t}_j] = \mathbb{E}\left[\sum_{i=1+(j-1)n_0}^{j n_0} (\mathbf{x}_i - \bar{\mathbf{x}}_j)\varepsilon_i\right]= \sum_{i=1+(j-1)n_0}^{j n_0}\mathbb{E}[\mathbf{x}_i - \bar{\mathbf{x}}_j]\mathbb{E}[\varepsilon_i] = \mathbf{0},
\]
and
\begin{align*}
\mathrm{Var}[\mathbf{t}_j]&=\mathbb{E}[\mathbf{t}_j\mathbf{t}_{j}^{\top}]\\
&= \mathbb{E}\left[ \sum_{i=1+(j-1)n_0}^{j n_0} \sum_{l=1+(j-1)n_0}^{j n_0} (\mathbf{x}_i - \bar{\mathbf{x}}_j)(\mathbf{x}_l - \bar{\mathbf{x}}_j)^\top \varepsilon_i \varepsilon_l \right] \\
&= \tau_{j}^2 \mathbb{E}\left[ \sum_{i=1+(j-1)n_0}^{j n_0} (\mathbf{x}_i - \bar{\mathbf{x}}_j)(\mathbf{x}_i - \bar{\mathbf{x}}_j)^\top \right] \\
&= (n_0 - 1)\tau_{j}^2 \bm{\Sigma}_{\mathbf{xx}},
\end{align*}
where the third equality follows from the independence of $\varepsilon_i$ and $\mathbf{x}_i$, along with the fact that $\mathbb{E}[\varepsilon_i\varepsilon_l] = 0$ for $i \neq l$.

Since $\tau_{j}^2$ is uniformly bounded by $\overline{\sigma}^2$, we have $\sup_{j} \|\mathrm{Var}[\mathbf{t}_j]\| < \infty$. By the Weak Law of Large Numbers for independent random vectors with  uniformly bounded variances,
\begin{equation}\label{eq:bc2}
\frac{1}{T} \sum_{j=1}^{T}\mathbf{t}_j \xrightarrow{P} \mathbf{0}.
\end{equation}
Combining \eqref{eq:bc1} and \eqref{eq:bc2}, we have
\[
\hat{\bm{\beta}}_{\mathrm{EM}} - \bm{\beta}=\left( \frac{1}{T}\sum_{j=1}^{T} \mathbf{S}_j \right)^{-1} \frac{1}{T} \sum_{j=1}^{T}\mathbf{t}_j \xrightarrow{P} \mathbf{0},
\]
which is equivalent to $\hat{\bm{\beta}}_{\mathrm{EM}} \xrightarrow{P} \bm{\beta}$.
This completes the proof.
\end{proof}

\begin{proof}[Proof of Theorem \ref{thm:ANLEM fixed}]
From the proof of Theorem~\ref{thm:consistencyLEM fixed}, we have
$$
\sqrt{T}(\hat{\bm{\beta}}_{\mathrm{EM}} - \bm{\beta})=\left( \frac{1}{T}\sum_{j=1}^{T} \mathbf{S}_j \right)^{-1} \frac{1}{\sqrt{T}} \sum_{j=1}^{T}\mathbf{t}_j,
$$
where
$$
\mathbf{S}_j=\sum_{i=1+(j-1)n_0}^{j n_0} (\mathbf{x}_i - \bar{\mathbf{x}}_j)(\mathbf{x}_i - \bar{\mathbf{x}}_j)^{\top}, \quad
\mathbf{t}_j=\sum_{i=1+(j-1)n_0}^{j n_0} (\mathbf{x}_i - \bar{\mathbf{x}}_j)\varepsilon_i.
$$
We have already shown in the proof of Theorem~\ref{thm:consistencyLEM fixed} that
\begin{equation}\label{eq:ban1}
\left( \frac{1}{T}\sum_{j=1}^{T} \mathbf{S}_j\right)^{-1} \xrightarrow{P} [(n_0 - 1)\bm{\Sigma}_{\mathbf{xx}}]^{-1}.
\end{equation}
Moreover, $\{\mathbf{t}_j\}_{j=1}^{T}$ are independent with $\mathbb{E}[\mathbf{t}_j] = \mathbf{0}$ and
$$
\mathrm{Var}[\mathbf{t}_j]=(n_0 - 1)\tau_{j}^2 \bm{\Sigma}_{\mathbf{xx}}.
$$
Let
$$
\mathbf{B}_T^2 = \sum_{j=1}^T \mathrm{Var}[\mathbf{t}_j]=(n_0-1)\bm{\Sigma}_{\mathbf{xx}} \sum_{j=1}^T \tau_j^2,
$$
since $\tau_{j}^2\in [\underline{\sigma}^2,\overline{\sigma}^2]$ and $\bm{\Sigma}_{\mathbf{xx}}$ is positive definite, we have
\[
\lambda_{\min}(\mathbf{B}_T^2) = (n_0-1)\lambda_{\min}(\bm{\Sigma}_{\mathbf{xx}}) \sum_{j=1}^T \tau_j^2 \ge (n_0-1)\lambda_{\min}(\bm{\Sigma}_{\mathbf{xx}}) T\underline{\sigma}^2,
\]
where $\lambda_{\min}(\bm{\Sigma}_{\mathbf{xx}}) > 0$ is the minimal eigenvalue of $\bm{\Sigma}_{\mathbf{xx}}$.

Conditioned on $\{\mathbf{x}_i\}_{i=1}^{n}$, $\mathbf{t}_j$ follows a multivariate normal distribution. By the property of Gaussian moments, there exists a constant $C_p>0$ (depending only on $p$) such that
\[
\mathbb{E}\!\left[\|\mathbf{t}_j\|^3 \;\big|\; \{\mathbf{x}_i\}_{i=1}^{n}\right] \le C_p \tau_j^3 \left(\text{tr}(\mathbf{S}_j)\right)^{3/2} = C_p \tau_j^3 \left(\sum_{i=1+(j-1)n_0}^{j n_0} \|\mathbf{x}_i-\bar{\mathbf{x}}_j\|^2\right)^{3/2}.
\]
Taking unconditional expectation and using $\tau_j \le \overline{\sigma}$ and $\mathbb{E}[\|\mathbf{x}\|^3] < \infty$, we obtain
\[
\mathbb{E}[\|\mathbf{t}_j\|^3]\le C_p \overline{\sigma}^3 \mathbb{E}\!\left[\left(\sum_{i=1+(j-1)n_0}^{j n_0} \|\mathbf{x}_i-\bar{\mathbf{x}}_j\|^2\right)^{3/2}\right] \le C_0 \overline{\sigma}^3,
\]
where $C_0$ is a constant independent of $j$. Thus,
\[
\frac{1}{[\lambda_{\min}(\mathbf{B}_T^2)]^{3/2}} \sum_{j=1}^T \mathbb{E}[\|\mathbf{t}_j\|^3]
\le \frac{T C_0 \overline{\sigma}^3}{\left[(n_0-1)\lambda_{\min}(\bm{\Sigma}_{\mathbf{xx}}) T\underline{\sigma}^2\right]^{3/2}}
= \frac{C_0 \overline{\sigma}^3}{(n_0-1)^{3/2} \lambda_{\min}(\bm{\Sigma}_{\mathbf{xx}})^{3/2} \underline{\sigma}^3 \sqrt{T}},
\]
which implies that
\[
\lim_{T\to\infty} \frac{1}{[\lambda_{\min}(\mathbf{B}_T^2)]^{3/2}} \sum_{j=1}^T \mathbb{E}[\|\mathbf{t}_j\|^3] = 0.
\]
It follows directly from the Multivariate Lyapunov Central Limit Theorem that
\begin{equation}\label{eq:ban2}
 \frac{1}{\sqrt{T}} \sum_{j=1}^{T}\mathbf{t}_j \xrightarrow{d} \mathcal{N}(\mathbf{0}, \tilde{\sigma}_{T}^2 (n_0 - 1) \bm{\Sigma}_{\mathbf{xx}}),
\end{equation}
where $\tilde{\sigma}_{T}^2=\frac{1}{T}\sum_{j=1}^{T} \tau_{j}^2\in[\underline{\sigma}^2,\overline{\sigma}^2]$.

Applying Slutsky's theorem to \eqref{eq:ban1} and \eqref{eq:ban2}, we obtain
\[
\sqrt{T}(\hat{\bm{\beta}}_{\mathrm{EM}} - \bm{\beta}) \xrightarrow{d} \mathcal{N}(\mathbf{0}, \frac{\tilde{\sigma}_{T}^2}{n_0 - 1} \bm{\Sigma}_{\mathbf{xx}}^{-1}).
\]
Thus,
\[
\sqrt{n}(\hat{\bm{\beta}}_{\mathrm{EM}} - \bm{\beta}) \xrightarrow{d} \mathcal{N}(\mathbf{0}, \frac{n_{0}}{n_0 - 1} \tilde{\sigma}_{T}^2\bm{\Sigma}_{\mathbf{xx}}^{-1}).
\]
This completes the proof.
\end{proof}

\begin{proof}[Proof of Theorem \ref{thm:consistency_bounds}]
We first prove the consistency of $\hat{\overline{\mu}}$. Recall that for the $l$th block $B_{l}=\{(\mathbf{x}_{i},y_{i})\}_{i=l}^{l+w-1}$, we compute $a_l$ by solving
\[
\sum_{i=l}^{l+w-1} \bigl\{ y_i - (\mathbf{x}_i^{\top}\hat{\bm{\beta}}_{\mathrm{EM}} + a_l) \bigr\} = 0.
\]
This yields
\[
a_l = \frac{1}{w} \sum_{i=l}^{l+w-1} (y_i - \mathbf{x}_i^{\top}\hat{\bm{\beta}}_{\mathrm{EM}}).
\]
Substituting $y_i = \mathbf{x}_i^{\top}\bm{\beta}+ \nu_i + \varepsilon_i$ gives
\[
a_l = \underbrace{\frac{1}{w} \sum_{i=l}^{l+w-1} \nu_i}_{\textstyle :=\, \mu_l}
      + \underbrace{\frac{1}{w} \sum_{i=l}^{l+w-1} \mathbf{x}_i^{\top}(\bm{\beta} -\hat{\bm{\beta}}_{\mathrm{EM}})}_{\textstyle :=\, \delta_{1,l}}
      + \underbrace{\frac{1}{w} \sum_{i=l}^{l+w-1} \varepsilon_i}_{\textstyle :=\, \delta_{2,l}}.
\]
Here, $\mu_l = \mu(B_l)$ is precisely the expected average intercept for block $B_l$ as given in Definition~\ref{def:wbounds}, and by construction $\overline{\mu}_w = \max_{1\leq l\leq L} \mu_l$.

First, by Theorem \ref{thm:consistencyLEM fixed} we have $\hat{\bm{\beta}}_{\mathrm{EM}} \xrightarrow{P} \bm{\beta}$,
and Assumption \ref{assum:x} ($\mathbb{E}[\|\mathbf{x}\|^3] < \infty$) implies that $\mathbb{E}[\|\mathbf{x}\|] < \infty$, thus
\[
\max_{1 \leq l \leq L} |\delta_{1,l}| \leq \|\hat{\bm{\beta}}_{\mathrm{EM}} - \bm{\beta}\| \cdot \max_{1 \leq l \leq L}\frac{1}{w}\sum_{i=l}^{l+w-1}\|\mathbf{x}_i\| = o_p(1)\cdot O_p(1) = o_p(1).
\]

Second, since $\{\varepsilon_i\}_{i=1}^{n}$ are independent normal random variables with zero mean and variances bounded by $\overline{\sigma}^2$, they are sub-Gaussian. Hence, there exists a constant $C > 0$ (e.g., $C = 2\overline{\sigma}^2$) such that for each block $B_{l}$ and any $t > 0$,

\[
P\left( |\delta_{2,l}| > t \right) \le 2 \exp\left( -\frac{w t^2}{C} \right).
\]
Applying a union bound over all $L = n - w + 1$ blocks yields
\[
P\left( \max_{1 \le l \le L} |\delta_{2,l}| > t \right)\le \sum_{l=1}^{L} P\left( |\delta_{2,l}| > t \right)\le 2L \exp\left( -\frac{w t^2}{C} \right)< 2n \exp\left( -\frac{w t^2}{C} \right).
\]
By Assumption \ref{assum:w}, $w / \log n \to \infty$. Thus, for any fixed $t > 0$,
\[
n \exp\left( -\frac{w t^2}{C} \right) = \exp\left( \log n - \frac{w t^2}{C} \right) \to 0,
\]
which implies
\[
\max_{1 \le l \le L} |\delta_{2,l}| = o_p(1).
\]
Consequently, $\max_{1 \leq l \leq L} |a_l - \mu_l|=\max_{1 \leq l \leq L} |\delta_{1,l} + \delta_{2,l}| = o_p(1)$.

Since $\hat{\overline{\mu}}=\max_{1\leq l\leq L}a_{l}$ and $\overline{\mu}_w = \max_{1 \leq l \leq L} \mu_l$ , it follows immediately that

\[
|\hat{\overline{\mu}} - \overline{\mu}_w| = \left| \max_{1 \leq l \leq L} a_l - \max_{1 \leq l \leq L} \mu_l \right| \leq \max_{1 \leq l \leq L} |a_l - \mu_l| = o_p(1),
\]
and thus
$$
\hat{\overline{\mu}} \xrightarrow{P} \overline{\mu}_w.
$$
Similarly, we can prove that
$$
\hat{\underline{\mu}} \xrightarrow{P} \underline{\mu}_w.
$$
This completes the proof.
\end{proof}

\bibliographystyle{plainnat}
\bibliography{GLinear_reference}

@article{jin2021optimal,
  title={Optimal unbiased estimation for maximal distribution},
  author={Jin, Hanqing and Peng, Shige},
  journal={Probability, Uncertainty and Quantitative Risk},
  volume={6},
  number={3},
  pages={189--198},
  year={2021},
  publisher={Probability, Uncertainty and Quantitative Risk}
}

@article{lin2016k,
  title={k-sample upper expectation linear regression---{Modeling}, identifiability, estimation and prediction},
  author={Lin, Lu and Shi, Yufeng and Wang, Xin and Yang, Shuzhen},
  journal={Journal of Statistical Planning and Inference},
  volume={170},
  pages={15--26},
  year={2016},
  publisher={Elsevier}
}

@article{lin2017upper,
  title={Upper expectation parametric regression},
  author={Lin, Lu and Dong, Ping and Song, Yunquan and Zhu, Lixing},
  journal={Statistica Sinica},
  pages={1265--1280},
  year={2017},
  publisher={JSTOR}
}

@conference{PengICM2010,
    author ={Peng, Shige} ,
    booktitle ={ICM 2010} ,
    title ={Backward Stochastic Differential Equation, nonlinear expectations and their applications},
    year = 2010,
    address = {Hyderabad, India}
}

@article{peng2004filtration,
  title={Filtration consistent nonlinear expectations and evaluations of contingent claims},
  author={Peng, Shige},
  journal={Acta Mathematicae Applicatae Sinica, English Series},
  volume={20},
  pages={191--214},
  year={2004},
  publisher={Springer}
}

@article{peng2005nonlinear,
  title={Nonlinear expectations and nonlinear {Markov} chains},
  author={Peng, Shige},
  journal={Chinese Annals of Mathematics},
  volume={26},
  number={02},
  pages={159--184},
  year={2005},
  publisher={World Scientific}
}

@book{peng2019nonlinear,
  title={Nonlinear expectations and stochastic calculus under uncertainty: with robust CLT and G-Brownian motion},
  author={Peng, Shige},
  volume={95},
  year={2019},
  publisher={Springer Nature}
}

@article{peng2020hypothesis,
  title={A hypothesis-testing perspective on the {G-normal} distribution theory},
  author={Peng, Shige and Zhou, Quan},
  journal={Statistics and Probability Letters},
  volume={156},
  pages={108623},
  year={2020},
  publisher={Elsevier}
}

@article{yang2023linear,
  title={Linear regression under model uncertainty},
  author={Shuzhen Yang and Jianfeng Yao},
  journal={Probability, Uncertainty and Quantitative Risk},
  volume = {8},
  number = {4},
  pages = {523-546},
  year = {2023}
}

@article{li2026solution,
  title={A solution to the p-hacking problem with a general robust significance test under variance uncertainty},
  author={Li, Xifeng and Yang, Shuzhen and Yao, Jianfeng},
  journal={Statistical Papers},
  volume={67},
  number={1},
  pages={14},
  year={2026},
  publisher={Springer}

}

@article{liang2015assessing,
  title={Assessing {B}eijing's {PM}2.5 pollution: severity, weather impact, {APEC} and winter heating},
  author={Liang, Xuan and Zou, Tao and Guo, Bin and Li, Shuo and Zhang, Haozhe and Zhang, Shuyi and Huang, Hui and Chen, Song Xi},
  journal={Proceedings of the Royal Society A: Mathematical, Physical and Engineering Sciences},
  volume={471},
  number={2182},
  year={2015},
  publisher={The Royal Society}
}

@article{hu2023arbitrage,
  title={Arbitrage pricing with heterogeneous spatial effects and heteroscedastic disturbances},
  author={Hu, Jianhua and Ding, Hao and Liu, Xiaoqian},
  journal={Journal of Financial Econometrics},
  volume={21},
  number={4},
  pages={1169--1195},
  year={2023},
  publisher={Oxford University Press}
}

@article{eberhardt2011econometrics,
  title={Econometrics for grumblers: a new look at the literature on cross-country growth empirics},
  author={Eberhardt, Markus and Teal, Francis},
  journal={Journal of economic Surveys},
  volume={25},
  number={1},
  pages={109--155},
  year={2011},
  publisher={Wiley Online Library}
}

@article{austin2017intermediate,
  title={Intermediate and advanced topics in multilevel logistic regression analysis},
  author={Austin, Peter C and Merlo, Juan},
  journal={Statistics in medicine},
  volume={36},
  number={20},
  pages={3257--3277},
  year={2017},
  publisher={Wiley Online Library}
}

@article{pouliot2016robust,
  title={Robust tests for change in intercept and slope in linear regression models with application to manager performance in the mutual fund industry},
  author={Pouliot, William},
  journal={Economic Modelling},
  volume={58},
  pages={523--534},
  year={2016},
  publisher={Elsevier}
}

@article{fan2024environment,
  title={Environment invariant linear least squares},
  author={Fan, Jianqing and Fang, Cong and Gu, Yihong and Zhang, Tong},
  journal={The Annals of Statistics},
  volume={52},
  number={5},
  pages={2268--2292},
  year={2024},
  publisher={Institute of Mathematical Statistics}
}

@article{yan2024statistical,
  title={Statistical inference for four-regime segmented regression models},
  author={Yan, Han and Chen, Song Xi},
  journal={The Annals of Statistics},
  volume={52},
  number={6},
  pages={2668--2691},
  year={2024},
  publisher={Institute of Mathematical Statistics}
}

@article{dempster1977maximum,
  title={Maximum likelihood from incomplete data via the {EM} algorithm},
  author={Dempster, A. P. and Laird, N. M. and Rubin, D. B.},
  journal={Journal of the Royal Statistical Society: Series B (methodological)},
  volume={39},
  number={1},
  pages={1--22},
  year={1977}
}

@misc{EPA_ECATT,
  author       = {{U.S. Environmental Protection Agency}},
  title        = {Air Monitoring Station {HAP} Data Rules and Calculations},
  year         = {2025},
  howpublished = {Available at: \url{https://echo.epa.gov/help/ecatt/air-monitoring-station-data-calculations}}
}

\end{document}